\newcommand{\bburl}[1]{\textcolor{blue}{\url{#1}}}
\newcommand{\burl}[1]{\textcolor{blue}{\url{#1}}}
\numberwithin{equation}{section}
\newtheorem{thm}{Theorem}[section]
\newtheorem{lem}[thm]{Lemma}
\newtheorem{prop}[thm]{Proposition}
\newtheorem{defi}[thm]{Definition}
\theoremstyle{plain}
\newtheorem{definition}[thm]{Definition}
\newtheorem{lemma}[thm]{Lemma}
\newtheorem{theorem}[thm]{Theorem}
\newcommand\be{\begin{equation}}
\newcommand\ee{\end{equation}}
\newcommand\bee{\begin{equation*}}
\newcommand\eee{\end{equation*}}
\newcommand\bea{\begin{eqnarray}}
\newcommand\eea{\end{eqnarray}}
\newcommand\beae{\begin{eqnarray*}}
\newcommand\eeae{\end{eqnarray*}}
\newcommand\bi{\begin{itemize}}
\newcommand\ei{\end{itemize}}
\newcommand\ben{\begin{enumerate}}
\newcommand\een{\end{enumerate}}
\newcommand\bc{\begin{center}}
\newcommand\ec{\end{center}}
\newcommand\ba{\begin{array}}
\newcommand\ea{\end{array}}
\newcommand\frakfamily{\usefont{U}{yfrak}{m}{n}}
\DeclareTextFontCommand{\textfrak}{\frakfamily}
\newtheorem{rek}[thm]{Remark}
\newcommand{\hr}[1]{\href{#1}{\url{#1}}}
\newcommand{\dfq}{d_{\rm FQ}}
\newcommand{\dave}{d_{\rm FQ; ave}}
\title{Legal Decompositions Arising from Non-positive Linear Recurrences}
\author{Minerva Catral}
\email{\textcolor{blue}{\href{mailto:catralm@xavier.edu}{catralm@xavier.edu}}}
\address{Department of Mathematics, Xavier University, Cincinnati, OH 45207}
\author{Pari L. Ford}
\email{\textcolor{blue}{\href{mailto:fordpl@bethanylb.edu}{fordpl@bethanylb.edu}}}
\address{Department of Mathematics and Physics, Bethany College, Lindsborg, KS 67456 }
\author{Pamela E. Harris}
\email{\textcolor{blue}{\href{mailto:Pamela.Harris@usma.edu}{Pamela.Harris@usma.edu}}}
\address{Department of Mathematical Sciences, United States Military Academy, West Point, NY 10996}
\author{Steven J. Miller}
\email{\textcolor{blue}{\href{mailto:sjm1@williams.edu}{sjm1@williams.edu}},  \textcolor{blue}{\href{Steven.Miller.MC.96@aya.yale.edu}{Steven.Miller.MC.96@aya.yale.edu}}}
\address{Department of Mathematics and Statistics, Williams College, Williamstown, MA 01267}
\author{Dawn Nelson}
\email{\textcolor{blue}{\href{mailto:dnelson1@saintpeters.edu}{dnelson1@saintpeters.edu}}}
\address{Department of Mathematics, Saint Peter's University, Jersey City, NJ 07306}
\thanks{The fourth named author was partially supported by NSF grants DMS1265673 and DMS1561945. This research was performed while
the third named author held a National Research Council Research Associateship Award at USMA/ARL. This work was begun at the 2014 REUF Meeting at AIM and continued during a REUF continuation grant at ICERM; it is a pleasure to thank them for  their support. We also thank the participants  at the 13\textsuperscript{th} annual Combinatorial and Additive Number Theory workshop (CANT 2015) for helpful discussions.}
\subjclass[2010]{60B10, 11B39, 11B05  (primary) 65Q30 (secondary)}
\keywords{Zeckendorf decompositions, Fibonacci quilt, non-uniqueness of representations, positive linear recurrence relations, Gaussian behavior, distribution of gaps}
\date{\today}
\begin{document}

\maketitle

\begin{abstract} Zeckendorf's theorem states that any positive integer can be written uniquely as a sum of non-adjacent Fibonacci numbers; this result has been generalized to many recurrence relations, especially those arising from linear recurrences with leading term positive. We investigate legal decompositions arising from two new sequences: the $(s,b)$-Generacci sequence and the Fibonacci Quilt sequence. Both satisfy recurrence relations with leading term zero, and thus previous results and techniques do not apply.  These sequences exhibit drastically different behavior.  We show that  the $(s,b)$-Generacci sequence leads to unique legal decompositions,  whereas not only do we have non-unique legal decompositions with  the Fibonacci Quilt sequence, we also have that  in this case the average number of legal decompositions  grows exponentially. Another interesting difference is that  while in the $(s,b)$-Generacci case the greedy algorithm always leads to a legal decomposition,   in the Fibonacci Quilt setting  the greedy algorithm leads to a legal decomposition (approximately) 93\% of the time. In the $(s,b)$-Generacci case, we again have Gaussian behavior in the number of summands as well as for the Fibonacci Quilt sequence when we restrict to decompositions resulting from a modified greedy algorithm.
\end{abstract}

\tableofcontents


\section{Introduction}


A beautiful result of Zeckendorf describes the Fibonacci numbers as the unique sequence from which every natural number can be expressed uniquely as a sum of nonconsecutive terms in the sequence \cite{Ze}.  Zeckendorf's theorem inspired many questions about this decomposition, and generalizations of the notions of legal decompositions of natural numbers as sums of elements from an integer sequence has been a fruitful area of research \cite{BBGILMT, BCCSW, BDEMMTTW, BILMT, CFHMN1, DDKMMV, DDKMV, DFFHMPP, GTNP, Ha, KKMW, MW1, MW2}.

Much of previous work has focused on sequences given by a \textit{Positive Linear Recurrence (PLR)}, which are sequences where there is a fixed depth $L > 0$ and non-negative integers $c_1, \dots, c_L$ with $c_1, c_L $ non-zero such that \be a_{n+1} \ = \ c_1 a_n + \cdots + c_L a_{n+1-L}. \ee   The restriction that $c_1 > 0$ is required to gain needed control over roots of polynomials associated to the characteristic polynomials of the recurrence and related generating functions, though in the companion paper \cite{CFHMNPX} we show how to bypass some of these technicalities through new combinatorial techniques. The motivation for this paper is to investigate whether the positivity of the first coefficient is needed solely to simplify the arguments, or if fundamentally different behavior can emerge if the said condition is not met.  To this end, we investigate the legal decompositions arising from two different sequences which we introduce in this paper:  the $(s,b)$-Generacci sequence and the Fibonacci Quilt sequence. Both satisfy recurrence relations with leading term zero,  hence previous  results  and techniques are not applicable. Moreover, although both have non-positive linear recurrences (as their leading term is zero), they exhibit drastically different behavior: the $(s,b)$-Generacci sequence leads to unique legal decompositions,  whereas not only do we have non-unique legal decompositions with  the Fibonacci Quilt sequence, we also have that the average number of legal decompositions grows exponentially. Another interesting difference is that  while in the $(s,b)$-Generacci case the greedy algorithm always leads to a legal decomposition,   in the Fibonacci Quilt setting  the greedy algorithm leads to a legal decomposition (approximately) 93\% of the time.

We conclude the introduction by first describing the two sequences and their resulting decomposition rules and then stating our results.  Then in \S\ref{sec:recrelations} we determine the recurrence relations for the sequences, in \S\ref{sec:growthratefibquilt} we prove our claims on the growth of the average number of decompositions from the Fibonacci Quilt sequence, and then analyze the greedy algorithm and a generalization (for the Fibonacci Quilt) in \S\ref{sec:greedy}.

\subsection{The $(s,b)$-Generacci Sequence and the Fibonacci Quilt Sequence}\label{defofseqs}

\subsubsection{The $(s,b)$-Generacci Sequence} \ \\


One interpretation of Zeckendorf's Theorem \cite{Ze} is that the Fibonacci sequence is the unique sequence from which all natural numbers can be expressed as a sum of nonconsecutive terms.  Note there are two ingredients to the rendition: a sequence and a rule for determining what is a legal decomposition. An equivalent formulation for the Fibonacci numbers is to consider the sequence divided into bins of size one and decompositions can use the element in a bin at most once and cannot use elements from adjacent bins. A generalization of this bin idea was explored by the authors in \cite{CFHMN1}, where  bins of size 2 with the same non-adjacency condition were considered;  the sequence that arose was called the Kentucky sequence. The Kentucky sequence is what we now refer to here  as the $(1,2)$-Generacci sequence. This leads to a natural extension where we consider bins of size $b$ and any two summands of a decomposition must come from distinct bins with at least $s$ bins between them. We now give the technical definitions of the $(s,b)$-Generacci sequences and their associated legal decompositions.



\begin{defi}[$(s,b)$-Generacci legal decompositions]\label{def:sb}
For fixed integers $s, b \geq 1$, let an increasing sequence of positive integers $\{a_i\}_{i=1}^\infty$ and a family of subsequences $\mathcal{B}_n=\{a_{b(n-1)+1},\ldots,a_{bn}\}$ be given (we call these subsequences {\em bins}). We declare a decomposition of an integer $m = a_{\ell_1} + a_{\ell_2} + \dots + a_{\ell_k}$  where $a_{\ell_i} > a_{\ell_{i+1}}$  to be an {\em $(s,b)$-Generacci  legal decomposition} provided $\{a_{\ell_i}, a_{\ell_{i+1}}\} \not\subset \mathcal{B}_{j-s}\cup \mathcal{B}_{j-s+1}\cup \dots \cup \mathcal{B}_{j}$ for all $i,j$. (We say $\mathcal{B}_{j} = \emptyset$ for $j\leq 0$.)
\end{defi}

Thus if we have a summand $a_{\ell_i} \in \mathcal{B}_j$ in a legal decomposition, we cannot have any other summands from that bin, nor any summands from any of the $s$ bins preceding or any of the $s$ bins following $\mathcal{B}_j$.


\begin{defi}[$(s,b)$-Generacci sequence]\label{sbDefi}
For fixed integers $s, b \geq 1$, an increasing sequence of positive integers $\{a_i\}_{i=1}^\infty$ is the {\em $(s,b)$-Generacci sequence} if every $a_i$ for $i \geq 1$ is the smallest positive integer that does not have an $(s,b)$-Generacci legal decomposition using the elements $\{a_1, \dots, a_{i-1}\}.$
\end{defi}

Using the above definition and Zeckendorf's theorem,   we see that the $(1,1)$-Generacci sequence is the Fibonacci sequence (appropriately normalized).  Some other known sequences arising  from the  $(s,b)$-Generacci sequences are Narayana's cow sequence, which is the $(2,1)$-Generacci sequence, and the Kentucky sequence, which is the $(1,2)$-Generacci sequence.

\begin{theorem}[Recurrence Relation and Explicit Formula]\label{thrm:recurrence} Let $s,b\geq 1$ be fixed. If $n>(s+1)b+1$, then the $n$\textsuperscript{th} term of the $(s,b)$-Generacci sequence is given by the recurrence relation
\begin{align}\label{recurrence}
a_n &\ =\ a_{n-b}+ba_{n-(s+1)b}.
\end{align}
We have a generalized Binet's formula, with
\begin{align}
a_n & \ = \  c_1\lambda_1^n \left[1 + O\left((\lambda_2/\lambda_1)^n\right)\right]
\label{explicitConstant}\end{align}
where $\lambda_1$ is the largest root of $x^{(s+1)b} - x^{sb} - b = 0,$ and $c_1$ and $\lambda_2$ are constants with $\lambda_1 > 1,$ $c_1 > 0 $ and $|\lambda_2| < \lambda_1$.
\end{theorem}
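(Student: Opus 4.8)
The plan is to collapse the whole construction onto a single counting function, read off the recurrence from it, and then extract the Binet formula from the associated characteristic polynomial. First I would record the combinatorial structure of a legal decomposition implied by Definition~\ref{def:sb}: any two summands must lie in bins whose indices differ by at least $s+1$, and in particular at most one summand is drawn from each bin. Hence a (possibly empty) legal decomposition using only elements of the first $N$ bins $\mathcal{B}_1,\dots,\mathcal{B}_N$ is exactly the data of a subset $S\subseteq\{1,\dots,N\}$ whose elements are pairwise at distance $\ge s+1$, together with a choice of one of the $b$ elements in each chosen bin. Writing $f(N)$ for the number of such decompositions and conditioning on whether the top bin $\mathcal{B}_N$ is used gives
\[ f(N) \ =\ f(N-1) + b\, f(N-s-1), \]
since using $\mathcal{B}_N$ contributes the factor $b$ and forbids the $s$ bins immediately below it.

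The heart of the argument is a simultaneous strong induction establishing two facts: that the integers legally representable by $\{a_1,\dots,a_{bN}\}$ are exactly $\{0,1,\dots,f(N)-1\}$, each uniquely, so that the smallest non-representable integer is $a_{bN+1}=f(N)$; and, by adjoining the elements of $\mathcal{B}_{N+1}$ one at a time, that within a bin consecutive terms satisfy $a_{bN+k}=f(N)+(k-1)f(N-s)$ for $1\le k\le b$. The inductive step splits decompositions according to which element (if any) of the top bin is used: a decomposition using an element of $\mathcal{B}_{N+1}$ pairs it with an arbitrary legal decomposition of bins $1,\dots,N-s$, contributing a block of $f(N-s)$ consecutive new values, and the crucial point is that these blocks abut with neither gaps nor overlaps. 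This is the step I expect to be the main obstacle, precisely because the leading coefficient is zero: one must control the interplay between the within-bin shift $f(N-s)$ and the between-bin gaps, and the bookkeeping degenerates near the start (which is why the clean recurrence requires $n>(s+1)b+1$). Granting these two facts, writing $n=bN+k$ and using $f(N)-f(N-1)=b\,f(N-s-1)$ yields
\[ a_n - a_{n-b} \ =\ b\,f(N-s-1) + (k-1)\,b\,f(N-2s-1) \ =\ b\,a_{b(N-s-1)+k} \ =\ b\,a_{n-(s+1)b}, \]
which is exactly $a_n = a_{n-b} + b\,a_{n-(s+1)b}$.

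For the Binet formula I would substitute $a_n=x^n$ to obtain the characteristic polynomial $x^{(s+1)b}-x^{sb}-b$. Existence and uniqueness of a dominant real root are elementary: on $(0,\infty)$ the function $x^{sb}(x^b-1)-b$ equals $-b<0$ at $x=1$, tends to $+\infty$, and is strictly increasing for $x>1$, so there is a unique root $\lambda_1>1$ among the positive reals. Since every lag in the recurrence is a multiple of $b$, the cleanest route to strict dominance is to set $y=x^b$ and study $y^{s+1}-y^s-b$: each residue subsequence $\{a_{bm+r}\}_m$ satisfies $\widetilde{a}_m=\widetilde{a}_{m-1}+b\,\widetilde{a}_{m-(s+1)}$, whose companion matrix is nonnegative and, because the lags $1$ and $s+1$ are coprime, primitive. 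Perron–Frobenius then gives a simple dominant eigenvalue $\mu_1=\lambda_1^{\,b}>1$ strictly exceeding every other root in modulus, with strictly positive leading coefficient (positivity inherited from the $a_n$). Feeding this back through $\lambda_1=\mu_1^{1/b}$ and the standard linear-recurrence expansion produces $a_n = c_1\lambda_1^n[1+O((\lambda_2/\lambda_1)^n)]$ with $c_1>0$ and $|\lambda_2|<\lambda_1$ the second-largest modulus. The only subtlety is that $x^b=\mu_1$ contributes $b$ roots of modulus $\lambda_1$, so the leading constant is most transparently determined residue class by residue class in $n\bmod b$; the genuine exponential decay in the error term comes from $\mu_2/\mu_1$ at the level of the subsequence polynomial $y^{s+1}-y^s-b$.
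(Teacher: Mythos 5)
Your proposal is correct in outline, but it reaches the recurrence \eqref{recurrence} by a genuinely different route than the paper. You introduce the counting function $f(N)$ for legal decompositions supported on the first $N$ bins, derive $f(N)=f(N-1)+b\,f(N-s-1)$, and then prove by simultaneous strong induction that the representable integers are exactly $\{0,1,\dots,f(N)-1\}$ (each uniquely), so that $a_{bN+1}=f(N)$ and $a_{bN+k}=f(N)+(k-1)f(N-s)$; the recurrence then drops out in two lines of telescoping. The paper never introduces a counting function: it proves a downward-closure lemma (if $k$ is representable so is $k-1$), deduces the in-bin relations $a_{j+nb}=a_{1+nb}+(j-1)a_{1+(n-s)b}$ (Lemma \ref{jthinbinRR}) directly from the construction, and then telescopes algebraically. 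A notable consequence is that the paper's derivation of \eqref{recurrence} does not need uniqueness of decompositions at all --- that is the separate Theorem \ref{uniquedecomp}, whose proof the paper omits with a citation --- whereas your route needs uniqueness to equate the count $f(N)$ with the number of representable integers; your ``blocks abut with neither gaps nor overlaps'' induction is precisely where that work lives, and you correctly flag it as the main obstacle but do not execute it (it does go through, by the argument you sketch, since consecutive elements of a bin differ by exactly $f(N-s)$). What your approach buys is a cleaner mechanism plus completeness and uniqueness as byproducts; what the paper's buys is a shorter path to \eqref{recurrence} with weaker inputs. For the Binet formula your argument is essentially the paper's Appendix \ref{sec:genbinetformSBGen}: the substitution $y=x^b$, the unique positive root of $y^{s+1}-y^s-b$, Perron--Frobenius dominance (the paper invokes irreducibility; your observation that the coprime lags $1$ and $s+1$ give primitivity is the right sharpening), and positivity of $c_1$ forced by positivity of the sequence --- though you assert this last point in one line, while the paper must rule out $c_1=0$ using that every non-Perron root of $q(y)$ is negative or non-real, and it also verifies the roots are distinct before invoking the standard expansion. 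You are also right, and commendably explicit, that the $b$ roots of modulus $\lambda_1$ force a residue-class-by-residue-class treatment of the leading constant: the paper's own proof produces constants $c_j$ depending on $n \bmod b$ (e.g., for $(s,b)=(1,2)$ the even- and odd-index classes have genuinely different constants), so the single-constant form of \eqref{explicitConstant} is a mild abuse of notation that both you and the paper resolve the same way.
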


\begin{rek} The $(s,b)$-Generacci sequence also satisfies the recurrence
\begin{align}\label{f-recurrence}
a_n &\ =\ a_{n-1}+a_{n-1 - f(n-1)},
\end{align}
where $f(kb+j) = sb+j-1$ for  $j=1, \dots, b$. While this representation does have its leading coefficient positive, note the depth $L=f(n-1)+1$ is \emph{not} independent of $n$, and thus this representation is not a Positive Linear Recurrence.
\end{rek}

%

The proof of Theorem \ref{thrm:recurrence} is given in \S\ref{sec:proofofrecurrence}.  We note that the leading term in the recurrence in  \eqref{recurrence}  is zero whenever $b \ge 2$, and hence this sequence falls out of the scope of the Positive Linear Recurrences results.



\subsubsection{Fibonacci Quilt Sequence} \ \\


The Fibonacci Quilt sequence arose from the goal of finding a sequence coming from a 2-dimensional process. We begin by recalling the beautiful fact that the Fibonacci numbers tile the plane with squares spiraling to infinity, where the side length of the $n$\textsuperscript{th} square is $F_n$ (see Figure \ref{fig:fib_spiral}; note that here we start the Fibonacci sequence with two 1's).
\begin{figure}[h]
\begin{center}
\scalebox{.4}{\includegraphics{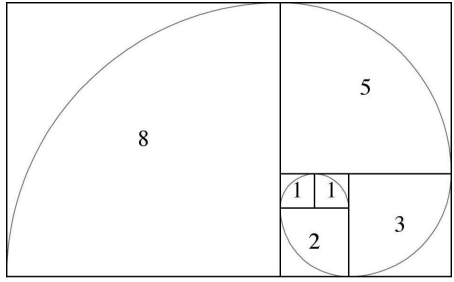}}
\caption{\label{fig:fib_spiral} The (start of the) Fibonacci Spiral.}
\end{center}\end{figure}

Inspired by Zeckendorf decomposition rules and by the Fibonacci spiral we define the following notion of legal decompositions and create the associated integer sequence which we call the Fibonacci Quilt sequence. The spiral depicted in Figure \ref{fig:fib_spiral} can be viewed as a log cabin \emph{quilt} pattern, such as that presented in Figure \ref{fig:fibquiltspiralseq} (left). Hence we adopt the name Fibonacci Quilt sequence.

\begin{definition}[FQ-legal decomposition]
Let an increasing sequence of positive integers $\{q_i\}_{i=1}^\infty$ be given. We declare a decomposition of an integer
\be m \ = \  q_{\ell_1} + q_{\ell_2} +\cdots + q_{\ell_t}\ee (where $q_{\ell_i} > q_{\ell_{i+1}}$)
to be an {\em FQ-legal decomposition} if for all $i,j$, $|\ell_i-\ell_j|\neq 0,1,3,4$ and $\{1,3\}\not\subset\{\ell_1,\ell_2,\ldots \ell_t\}$.
\end{definition}

This means that if the terms of the sequence are arranged in a spiral in the rectangles of a log cabin quilt, we cannot use two terms if they share part of an edge.  Figure \ref{fig:fibquiltspiralseq} shows that  $q_n+q_{n-1}$ is not legal, but $q_n+q_{n-2}$ is legal for $n \ge 4$. The starting pattern of the quilt forbids decompositions that contain $q_3+q_1$.


We define a new sequence $\{q_n\}$, called the Fibonacci Quilt sequence, in the following way.

\begin{definition}[Fibonacci Quilt sequence]
An increasing sequence of positive integers $\{q_i\}_{i=1}^\infty$ is called the {\em Fibonacci Quilt sequence}
if every $q_i$ ($i\geq1$) is the smallest positive integer that does not have an FQ-legal decomposition
using the elements $\{q_1,\ldots,q_{i-1}\}$.
\end{definition}

From the definition of an FQ-legal  decomposition, the reader can see that the first five terms of the sequence must be $\{1, 2, 3, 4, 5\}$. We have $q_6\neq 6$ as $6 = q_4 +q_2 = 4+2$ is an FQ-legal decomposition. We must have $q_6 = 7$. Continuing we have the start of the Fibonacci Quilt sequence displayed in Figure \ref{fig:fibquiltspiralseq} (right). Note that with the exception of a few initial terms, the Fibonacci Quilt sequence and the Padovan (see entry A000931 from the OEIS) sequence are eventually identical.

\begin{figure}
\begin{minipage}{\textwidth}
\begin{minipage}[b]{0.5\textwidth}
\centering
\resizebox{.85\textwidth}{!}{
\begin{tikzpicture}
\draw (0,0) rectangle (1,1);
\node at (.5,.5) {$q_1$};
\draw (0,0) rectangle (1,-1);
\node at (.5,-.5) {$q_2$};
\draw (1,-1) rectangle (2,1);
\node at (1.5,0) {$q_3$};
\draw (2,1) rectangle (0,2);
\node at (1,1.5) {$q_4$};
\draw (0,2) rectangle (-1,-1);
\node at (-.5,.5) {$q_5$};
\draw (-1,-1) rectangle (2,-2);
\node at (.5,-1.5) {$q_6$};
\draw (2,-2) rectangle (3,2);
\node at (2.5,0) {$q_7$};
\draw (3,2) rectangle (-1,3);
\node at (1,2.5) {$\vdots$};
\draw (-1,3) rectangle (-2,-2);
\node at (-1.5,.5) {$\cdots$};
\draw (-2,-2) rectangle (3,-3);
\node at (.5,-2.5) {$\vdots$};
\draw (3,-3) rectangle (4,3);
\node at (3.5,0) {$\cdots$};
\draw (4,3) rectangle (-2,4);
\node at (1,3.5) {$q_{n-4}$};
\draw (-2,4) rectangle (-3,-3);
\node at (-2.5,.5) {$q_{n-3}$};
\draw (-3,-3) rectangle (4,-4);
\node at (.5,-3.5) {$q_{n-2}$};
\draw (4,-4) rectangle (5,4);
\node at (4.5,0) {$q_{n-1}$};
\draw (5,4) rectangle (-3,5);
\node at (1,4.5) {$q_{n}$};
\draw (-3,5) rectangle (-4,-4);
\node at (-3.5,.5) {$q_{n+1}$};
\draw (-4,-4) rectangle (5,-5);
\node at (.5,-4.5) {$q_{n+2}$};
\draw (5,-5) rectangle (6,5);
\node at (5.5,0) {$q_{n+3}$};
\end{tikzpicture}}
 \end{minipage}
  \begin{minipage}[b]{0.5\textwidth}
\centering
\resizebox{.85\textwidth}{!}{
\begin{tikzpicture}
\draw (0,0) rectangle (1,1);
\node at (.5,.5) {$1$};
\draw (0,0) rectangle (1,-1);
\node at (.5,-.5) {$2$};
\draw (1,-1) rectangle (2,1);
\node at (1.5,0) {$3$};
\draw (2,1) rectangle (0,2);
\node at (1,1.5) {$4$};
\draw (0,2) rectangle (-1,-1);
\node at (-.5,.5) {$5$};
\draw (-1,-1) rectangle (2,-2);
\node at (.5,-1.5) {$7$};
\draw (2,-2) rectangle (3,2);
\node at (2.5,0) {$9$};
\draw (3,2) rectangle (-1,3);
\node at (1,2.5) {$12$};
\draw (-1,3) rectangle (-2,-2);
\node at (-1.5,.5) {$16$};
\draw (-2,-2) rectangle (3,-3);
\node at (.5,-2.5) {$21$};
\draw (3,-3) rectangle (4,3);
\node at (3.5,0) {$28$};
\draw (4,3) rectangle (-2,4);
\node at (1,3.5) {$37$};
\draw (-2,4) rectangle (-3,-3);
\node at (-2.5,.5) {$49$};
\draw (-3,-3) rectangle (4,-4);
\node at (.5,-3.5) {$65$};
\draw (4,-4) rectangle (5,4);
\node at (4.5,0) {$86$};
\draw (5,4) rectangle (-3,5);
\node at (1,4.5) {$114$};
\draw (-3,5) rectangle (-4,-4);
\node at (-3.5,.5) {$151$};
\draw (-4,-4) rectangle (5,-5);
\node at (.5,-4.5) {$200$};
\draw (5,-5) rectangle (6,5);
\node at (5.5,0) {$265$};
\draw (6,5) rectangle (-4,6);
\node at (1,5.5) {$351$};
\draw (-4,6) rectangle (-5,-5);
\node at (-4.5,.5) {$465$};
\end{tikzpicture}}
\end{minipage}
  \end{minipage}
\caption{\label{fig:fibquiltspiralseq} (Left) Log Cabin Quilt Pattern. (Right) First  few terms of the Fibonacci Quilt sequence.}
\end{figure}
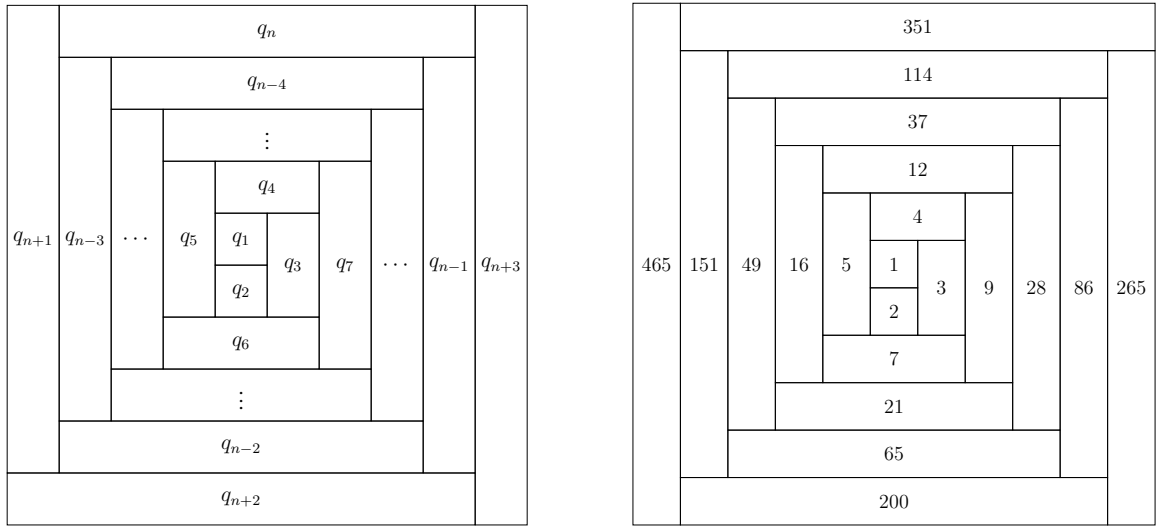

\begin{thm}[Recurrence Relations]\label{thm:rrfibquilt} Let $q_n$ denote the $n$\textsuperscript{th} term in the Fibonacci Quilt. Then
 \be \mbox{{\rm for} }\, n\geq 6,\,  q_{n+1}\ =\ q_{n}  + q_{n-4},\label{eq:rr1}\ee
\be \mbox{{\rm for} } \, n \geq 5, \, q_{n+1}\ =\ q_{n-1}  + q_{n-2}, \label{eq:rr2}\ee
\be \sum_{i=1}^n q_i\ =\ q_{n+5} -6.   \label{eq:sum} \ee
\end{thm}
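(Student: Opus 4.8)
The plan is to establish the two recurrence relations \eqref{eq:rr1} and \eqref{eq:rr2} simultaneously by strong induction directly from the greedy definition of $\{q_n\}$, and then to deduce the summation formula \eqref{eq:sum} from \eqref{eq:rr1} by telescoping. The engine of the induction is the following reformulation of the defining property: for every $j$, the number $q_j$ is the smallest positive integer with no FQ-legal decomposition using $\{q_1,\dots,q_{j-1}\}$, so every integer in $[1,q_j-1]$ has such a decomposition while $q_j$ does not. Since enlarging the set of available summands can only create more decompositions, this yields the fact I will use repeatedly, abbreviated $(\mathrm{D})$: for any $i\le j-1$, the term $q_j$ has no FQ-legal decomposition using only $\{q_1,\dots,q_i\}$. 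Throughout I write $V:=q_{n-1}+q_{n-2}$ and recall that a set of indices is FQ-legal exactly when its pairwise differences avoid $\{1,3,4\}$ and it does not contain both $1$ and $3$.

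Assume \eqref{eq:rr2}, equivalently $q_k=q_{k-2}+q_{k-3}$, for all $k\le n$. I first show every integer $m\in[1,V-1]$ is representable using $\{q_1,\dots,q_n\}$, which forces $q_{n+1}\ge V$. Integers in $[1,q_n-1]$ are representable by $(\mathrm{D})$ applied to $q_n$. For $m\in[q_n,V-1]$ write $m=q_n+r$; the inductive recurrences give $V-q_n=q_{n-1}+q_{n-2}-q_n=q_{n-4}$, so $0\le r\le q_{n-4}-1$. If $r=0$ then $m=q_n$ is a legal one-term decomposition, and if $r\ge 1$ then $(\mathrm{D})$ provides a legal decomposition of $r$ using indices $\le n-5$; appending the index $n$ preserves legality, since every used index differs from $n$ by at least $5$ and the starting condition is untouched. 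Hence all of $[1,V-1]$ is representable.

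The crux is to show $V$ itself has no FQ-legal decomposition using $\{q_1,\dots,q_n\}$; with the previous paragraph this gives $q_{n+1}=V$, i.e.\ \eqref{eq:rr2} at $n+1$. The main obstacle is that, unlike in the classical Zeckendorf setting, the representable integers do \emph{not} form an initial segment: numbers larger than the first gap are representable (for instance $8=q_3+q_5$ while $7=q_6$ is omitted), and in fact the largest attainable legal sum grows strictly faster than $q_{n+1}$, so non-representability of $V$ cannot come from a size or maximality argument and must be extracted combinatorially. Suppose $V=\sum_{i\in S}q_i$ for an FQ-legal $S\subseteq\{1,\dots,n\}$ with $M:=\max S$; since $V>q_n$ we have $|S|\ge 2$. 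I peel off $q_M$ and split into cases. If $M=n$, legality forbids $n-1,n-3,n-4$, so the other indices lie in $\{n-2\}\cup\{1,\dots,n-5\}$; the residue is $q_{n-4}$, and using $n-2$ would overshoot ($q_n+q_{n-2}>V$), so $q_{n-4}$ would be built from indices $\le n-5$, contradicting $(\mathrm{D})$. If $M=n-1$, the remaining legal indices are all $\le n-3$ with residue $q_{n-2}$, impossible by $(\mathrm{D})$; if $M=n-2$, the remaining indices are all $\le n-4$ with residue $q_{n-1}$, again impossible by $(\mathrm{D})$. The only surviving case, $M\le n-3$, is closed by an auxiliary bound.

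Let $M_m$ denote the largest value of an FQ-legal sum using $\{q_1,\dots,q_m\}$. I claim $M_m<q_{m+3}$, by a short induction on $m$: for an optimal set with maximum index $m$, the indices below $m$ avoid $m-1,m-3,m-4$, and according as $m-2$ is excluded or used the sum is at most $q_m+M_{m-5}$ or $q_m+q_{m-2}+M_{m-7}$, each $<q_{m+3}=q_{m+1}+q_m$ after invoking \eqref{eq:rr2} at the smaller indices $m+1,m+3$ (available from the outer hypothesis, since here $m\le n-3$), with small $m$ checked by hand. Granting this, in the case $M\le n-3$ we get $\sum_{i\in S}q_i\le M_{n-3}<q_n<V$, a contradiction, so $q_{n+1}=V$. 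The inductive recurrences also give $V=q_{n-1}+q_{n-2}=q_n+q_{n-4}$, which is \eqref{eq:rr1} at $n+1$; after checking the base cases $q_6=q_4+q_3,\ q_7=q_5+q_4,\dots$ directly, both recurrences follow. Finally, summing \eqref{eq:rr1} in the form $q_{k+1}-q_k=q_{k-4}$ over $6\le k\le n+4$ telescopes to $q_{n+5}-q_6=\sum_{j=2}^{n}q_j$, i.e.\ $\sum_{j=1}^n q_j=q_{n+5}-6$, with $n=1$ verified separately, giving \eqref{eq:sum}.
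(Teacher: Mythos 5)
Your proposal is correct, and its core step runs along a genuinely different line from the paper's. Both proofs share the same skeleton: show every integer in $[1, q_n+q_{n-4}-1]$ is representable from $\{q_1,\dots,q_n\}$ by appending $q_n$ to decompositions with indices at most $n-5$, then show $V=q_n+q_{n-4}=q_{n-1}+q_{n-2}$ itself is not representable. The difference is how non-representability is extracted. The paper carries the summation identity \eqref{eq:sum} \emph{inside} the simultaneous induction and uses it as the size bound: after ruling out $q_n,q_{n-1},q_{n-2},q_{n-4}$ as summands (by the same residue argument you use via your fact $(\mathrm{D})$), the bound $\sum_{i=1}^{n-5}q_i=q_n-6<V$ forces $q_{n-3}$ into any putative decomposition, and two short cases (next summand $q_{n-5}$, or at most $q_{n-8}$), each closed with \eqref{eq:sum} again, give contradictions. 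You instead case on the maximal index $M$ and, for $M\le n-3$, invoke an auxiliary bound $M_m<q_{m+3}$ on the largest attainable FQ-legal sum, proved by its own induction; I checked both branches of that induction ($m-2$ excluded or used) and they close correctly using the recurrences at indices $\le n$, which your outer hypothesis supplies since $m\le n-3$. The paper's route is shorter because \eqref{eq:sum} does the bounding work for free once it is in the induction package; your route decouples \eqref{eq:sum} from the induction entirely (recovering it afterwards by telescoping \eqref{eq:rr1}) and produces a reusable quantitative lemma of independent interest --- your $M_m<q_{m+3}$ is a sharp companion to Lemma \ref{lem:5sum} and to the kind of maximal-sum estimates needed in the growth-rate analysis of \S\ref{sec:growthratefibquilt}. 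Two cosmetic points, neither a gap: your $(\mathrm{D})$ states only the non-representability half of the defining property, while Step 1 also uses the representability half (every integer in $[1,q_j-1]$ decomposes from $\{q_1,\dots,q_{j-1}\}$), so it deserves its own label; and your motivating remark that the maximal legal sum ``grows strictly faster than $q_{n+1}$'' should say larger constant rather than faster rate, since both grow like $\lambda_1^n$. Your base-case burden (small $m$ for the $M_m$ bound, small $n$ for the recurrences) is the analogue of the paper's brute-force check for $n\le 11$ and should be stated explicitly in a final write-up.
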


The proof is given in \S\ref{sec:proofofreccurencefq}.

\begin{rek} At first the above theorem seems to suggest that the Fibonacci Quilt \emph{is} a PLR, as \eqref{eq:rr1} gives us a recurrence where the leading coefficient is positive and, unlike the alternative expression for the $(s,b)$-Generacci, this time the depth is fixed. The reason it is not a PLR is subtle, and has to do with the second part of the definition: the decomposition law. The decomposition law is \emph{not} from using \eqref{eq:rr1} to reduce summands, but from the geometry of the spiral. It is worth remarking that \eqref{eq:rr2} is the minimal length recurrence for this sequence, and the characteristic polynomial arising from \eqref{eq:rr1} is divisible by the polynomial from \eqref{eq:rr2}.
\end{rek}


\subsection{Results}


Our theorems are for two sequences which satisfy recurrences with leading term zero. Prior results in the literature mostly considered Positive Linear Recurrences and results included the uniqueness of legal decompositions, Gaussian behavior of the number of summands, and exponential decay in the distribution of the gaps between summands \cite{BBGILMT,BILMT,DDKMMV,DDKMV,MW1,MW2}. In \cite{CFHMN1}, a first example of a non-positive linear recurrence appeared and the aforementioned results were proved using arguments technically similar to those already present in the literature. What is new in this paper are two extensions of the work presented in \cite{CFHMN1}. The first is the $(s,b)$-Generacci sequence, whose legal decompositions are unique but where new techniques are required to prove its various properties. The second is the more interesting newly discovered Fibonacci Quilt sequence, which displays drastically different behavior, one consequence being that the FQ-legal decompositions are not unique (for example, there are three distinct FQ-legal decompositions of 106: 86+16+4, 86+12+7+1, and 65+37+4).


\subsubsection{Decomposition results}

\begin{theorem}[Uniqueness of Decompositions for $(s,b)$-Generacci]\label{uniquedecomp}
For each pair of  integers $s,b \geq 1$,  a unique $(s,b)$-Generacci sequence exists.  Consequently,  for a  given pair of  integers $s,b \geq 1$,  every positive integer can be written uniquely as a sum of distinct terms of the $(s,b)$-Generacci sequence where no two summands are in the same bin, and between any two summands there are at least $s$ bins between them.
\end{theorem}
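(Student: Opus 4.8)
The plan is to prove both assertions at once by a single strong induction that keeps track, at each stage, of the exact set of integers representable by the terms introduced so far. For $i\ge 0$ let $R_i$ denote the set of nonnegative integers admitting an $(s,b)$-Generacci legal decomposition (in the sense of Definition~\ref{def:sb}) all of whose summands lie in $\{a_1,\dots,a_i\}$, so that $R_0=\{0\}$ via the empty decomposition. The first assertion is immediate: at each step $R_i$ is finite (it is contained in $\{0,1,\dots,a_1+\cdots+a_i\}$), so $\Z_{>0}\setminus R_i$ is nonempty and has a least element, which by Definition~\ref{sbDefi} is precisely $a_{i+1}$; hence the greedy recipe determines a unique sequence and it exists. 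The heart of the argument is the claim, to be proved by strong induction on $i$, that $R_i=\{0,1,\dots,a_{i+1}-1\}$ and that every element of $R_i$ has a \emph{unique} such decomposition. The base cases are handled by the first bin: a legal decomposition uses at most one term of $\mathcal{B}_1$, so $a_i=i$ and $R_i=\{0,1,\dots,i\}$ for $1\le i\le b$, with reps trivially unique.

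For the inductive step I would suppose the claim holds for all indices $\le i$ and write $a_{i+1}\in\mathcal{B}_{n+1}$, i.e.\ $i+1=bn+j$ with $1\le j\le b$. A legal decomposition with summands in $\{a_1,\dots,a_{i+1}\}$ either avoids $a_{i+1}$, in which case it is one of the decompositions counted by $R_i$, or it uses $a_{i+1}$. In the latter case Definition~\ref{def:sb} forces every other summand to sit in a bin $\mathcal{B}_m$ with $m\le n-s$ (there must be at least $s$ bins between it and $\mathcal{B}_{n+1}$), hence in $\{a_1,\dots,a_{b(n-s)}\}$. Deleting $a_{i+1}$ thus sets up a bijection between legal decompositions using $a_{i+1}$ and legal decompositions with summands in $\{a_1,\dots,a_{b(n-s)}\}$, the latter being counted by $R_{b(n-s)}$ (read as $R_0=\{0\}$ when $n\le s$, so that $a_{i+1}$ stands alone). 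Since $b(n-s)\le i$, the inductive hypothesis gives $R_{b(n-s)}=\{0,\dots,a_{b(n-s)+1}-1\}$, and therefore $R_{i+1}=R_i\cup\bigl(a_{i+1}+R_{b(n-s)}\bigr)$.

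Now the two pieces glue automatically: by hypothesis $R_i=\{0,\dots,a_{i+1}-1\}$, while $a_{i+1}+R_{b(n-s)}=\{a_{i+1},\dots,a_{i+1}+a_{b(n-s)+1}-1\}$ is an interval beginning exactly at $a_{i+1}$, so $R_{i+1}=\{0,\dots,a_{i+1}+a_{b(n-s)+1}-1\}$ is again an initial segment (this simultaneously recovers the recurrence $a_{i+2}=a_{i+1}+a_{b(n-s)+1}$, consistent with Theorem~\ref{thrm:recurrence}). The two pieces are disjoint because every integer in the second is $\ge a_{i+1}>\max R_i$; uniqueness transfers because any decomposition avoiding $a_{i+1}$ has value $<a_{i+1}$ (it lies in $R_i$), so an integer $\ge a_{i+1}$ must use $a_{i+1}$ exactly once, and removing it leaves a decomposition that is unique by the hypothesis applied to $R_{b(n-s)}$. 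This closes the induction. Finally, since the $a_{i+1}$ are strictly increasing and unbounded, every positive integer lies in some $R_i$ and hence has a legal decomposition, and because any legal decomposition of $m$ uses only summands $\le m$ its uniqueness is already guaranteed by the claim for any $i$ with $a_{i+1}>m$.

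I expect the main obstacle to be the combinatorial step in the inductive argument: verifying directly from Definition~\ref{def:sb} that the terms which may accompany the newly added $a_{i+1}$ are \emph{exactly} the full blocks of bins $\mathcal{B}_1,\dots,\mathcal{B}_{n-s}$, and that deleting $a_{i+1}$ is a genuine bijection onto the legal decompositions of those bins, with the boundary regime $n\le s$ (where $a_{i+1}$ can be combined with nothing) treated correctly. Once this bijection and the inductive description of $R_{b(n-s)}$ as an initial segment containing $0$ are in place, the gluing, disjointness, and uniqueness are essentially forced.
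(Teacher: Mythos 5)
Your argument is correct, and it is worth noting that the paper never actually writes out a proof of Theorem~\ref{uniquedecomp}: it states only that the result ``follows from a similar argument to that in the appendix of \cite{CFHMN1}'' (the Kentucky, i.e.\ $(1,2)$-Generacci, case) and remarks that one could alternatively invoke the $f$-decomposition machinery of \cite{DDKMMV}, since the $(s,b)$-Generacci rule (unlike the Fibonacci Quilt rule) is an $f$-decomposition. Your self-contained strong induction is in the spirit of the cited argument but packages everything into one identity, $R_{i+1}=R_i\sqcup\bigl(a_{i+1}+R_{b(n-s)}\bigr)$, justified by the correct observation that a summand in $\mathcal{B}_{n+1}$ excludes exactly the bins $\mathcal{B}_{n-s+1},\dots,\mathcal{B}_{n+1}$ and hence pairs bijectively (by deletion of $a_{i+1}$) with legal decompositions supported on $\mathcal{B}_1,\dots,\mathcal{B}_{n-s}$. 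This single step simultaneously delivers (i) the initial-segment property $R_i=\{0,\dots,a_{i+1}-1\}$, which in the paper is instead obtained from the lemma in \S\ref{sec:recrelations} that representability of $k$ implies representability of $k-1$ (downward closure), (ii) uniqueness, since any integer $\ge a_{i+1}$ is forced to use $a_{i+1}$ exactly once and the remainder lands in $R_{b(n-s)}$, and (iii) as a by-product the construction step $a_{i+2}=a_{i+1}+a_{b(n-s)+1}$, which is precisely the relation underlying Lemma~\ref{jthinbinRR} and hence Theorem~\ref{thrm:recurrence}, so your induction doubles as an independent derivation of the recurrence. Your handling of the boundary regime $n\le s$ (reading $R_{b(n-s)}$ as $R_0=\{0\}$, so $a_{i+1}$ stands alone and $a_{i+2}=a_{i+1}+1$) matches Lemma~\ref{l1}. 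The only point you should make explicit is that the greedily constructed sequence is increasing, as Definition~\ref{sbDefi} presupposes an increasing sequence; this is immediate since $a_i\in R_i$ and $R_{i-1}\subseteq R_i$, or from your interval formula, but it deserves a sentence.
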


As Theorem~\ref{uniquedecomp} follows from a similar argument to that in the appendix of \cite{CFHMN1}, we omit it in this paper.

\begin{rek}
We could also prove this result by showing that our sequence and legal decomposition rule give rise to an $f$-decomposition. These were defined and studied in \cite{DDKMMV}, and briefly a valid $f$-decomposition means that for each summand chosen a block of consecutive summands before are not available for use, and that number depends solely on $n$. The methods of \cite{DDKMMV} are applicable and yield that each positive integer has a unique legal decomposition.

These results are not available for the Fibonacci Quilt sequence, as the FQ-legal decomposition is not an $f$-decomposition. The reason is that in an $f$-decomposition there is a function $f$ such that if we have $q_n$ then we cannot have any of the $f(n)$ terms of the sequence immediately prior to $q_n$. There is no such $f$ for the Fibonacci Quilt sequence, as for $n \ge 8$ if we have $q_n$ we cannot have $q_{n-1}$ and $q_{n-3}$ but we can have $q_{n-2}$.
\end{rek}


We have already seen that the Fibonacci Quilt leads to non-unique decompositions; this is just the beginning of the difference in behavior. The first result concerns the exponential number of FQ-legal decompositions as we decompose larger integers. First we need to introduce some notation.
Let $\{q_n\}$ denote the Fibonacci Quilt  sequence. For each positive integer $m$ let $\dfq(m)$ denote the number of FQ-legal decomposition of $m$, and $\dave(n)$ the average number of FQ-legal decompositions of integers in $I_n := [0,q_{n+1})$; thus \be \dave(n) \ := \ \frac1{q_{n+1}} \sum_{m=0}^{q_{n+1}-1} \dfq(m). \ee In \S\ref{sec:growthratefibquilt} we prove the following.

\begin{thm}[Growth Rate of Average Number of Decompositions]\label{thm:growthratenumberdecomp} Let $r_1$ be the largest root of $r^7 - r^6 - r^2  -1 = 0$ (so $r_1 \approx 1.39704$)  and let $\lambda_1$ be the largest root of $x^3 - x - 1 = 0$ (so $\lambda_1 = \frac13 \left(\frac{27}{2} - \frac{3 \sqrt{69}}{2}\right)^{1/3} + 3^{-2/3}\left(\frac12 \left(9 + \sqrt{69}\right)\right)^{1/3} \approx 1.32472$), and set $\lambda = r_1/\lambda_1 \approx 1.05459$. There exist computable constants $C_2 > C_1 > 0$ such that for all $n$ sufficiently large, \be C_1 \lambda^n \ \le \ \dave(n)\ \le\ C_2 \lambda^n.\ee Thus the average number of FQ-legal decompositions of integers in $[0, q_{n+1})$ tends to infinity exponentially fast.
\end{thm}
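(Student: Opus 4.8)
The plan is to recast the average as a single combinatorial count and then sandwich that count between two quantities with the same exponential growth rate $r_1$, dividing at the very end by the known size $q_{n+1}\asymp\lambda_1^{n+1}$. Each FQ-legal decomposition of an integer $m$ is the same datum as a choice of an index set $T\subset\{1,2,\dots\}$ avoiding the forbidden differences $1,3,4$ and satisfying $\{1,3\}\not\subset T$, with $\sum_{i\in T}q_i=m$. Hence summing $\dfq(m)$ over $m\in[0,q_{n+1})$ just counts all such legal index sets whose summands total less than $q_{n+1}$. Writing $S_n$ for this count, we have $\dave(n)=S_n/q_{n+1}$, so it suffices to prove $S_n\asymp r_1^n$.

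First I would analyze the unconstrained count $B_n$, the number of legal index sets contained in $\{1,\dots,n\}$ (no two indices at distance $1$, $3$, or $4$, temporarily ignoring the $\{1,3\}$ restriction). Conditioning on whether $n\in T$, and in the affirmative case on whether $n-2$ (the largest index compatible with $n$) lies in $T$, a short case analysis, using that $n$ forbids $n-1,n-3,n-4$ and that adding $n-2$ additionally forbids $n-5,n-6$, yields the recurrence $B_n=B_{n-1}+B_{n-5}+B_{n-7}$ for $n$ large. Its characteristic polynomial is exactly $x^7-x^6-x^2-1$, whose largest root is $r_1$; since the transfer matrix is primitive (the allowed gaps include $2$ and all integers $\ge 5$, so their gcd is $1$), Perron--Frobenius furnishes a simple dominant root and hence $B_n\asymp r_1^n$ with positive constants. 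The extra rule $\{1,3\}\not\subset T$ only deletes sets sitting at the bottom of the range and does not change the growth rate: already the legal sets avoiding the index $1$ number $\gtrsim r_1^n$ and trivially satisfy it, so the constrained count $\tilde B_n$ still obeys $\tilde B_n\asymp r_1^n$.

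Next I would sandwich $S_n$. For the upper bound, any set counted by $S_n$ has largest index at most $n$, since a set with largest index $j\ge n+1$ would have sum at least $q_{n+1}$; hence $S_n\le\tilde B_n\le C_2' r_1^n$. For the lower bound I would convert the sum condition into an index condition using the identity $\sum_{i=1}^{n-4}q_i=q_{n+1}-6<q_{n+1}$, obtained from \eqref{eq:sum} in Theorem \ref{thm:rrfibquilt}: every legal set contained in $\{2,\dots,n-4\}$ automatically avoids index $1$ (so the $\{1,3\}$ rule holds) and has sum at most $\sum_{i=2}^{n-4}q_i<q_{n+1}$, hence is counted by $S_n$. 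There are $\gtrsim r_1^{\,n-4}\asymp r_1^n$ of these, giving $S_n\ge c_1' r_1^n$. Combining the two bounds yields $S_n\asymp r_1^n$.

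Finally, the Binet-type asymptotics for the recurrence $q_{n+1}=q_{n-1}+q_{n-2}$ give $q_{n+1}=c\,\lambda_1^{n+1}\bigl(1+o(1)\bigr)$ with $c>0$ and $\lambda_1$ the largest root of $x^3-x-1$, so dividing produces $\dave(n)=S_n/q_{n+1}\asymp(r_1/\lambda_1)^n=\lambda^n$, which is the claimed two-sided bound with computable $C_2>C_1>0$. I expect the main obstacle to be the case analysis establishing the clean recurrence $B_n=B_{n-1}+B_{n-5}+B_{n-7}$ together with the primitivity/dominant-root input needed to upgrade it to the genuine two-sided estimate $B_n\asymp r_1^n$; once these are in place, the sum identity makes the sandwiching and the final division routine.
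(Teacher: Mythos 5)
Your proposal is correct and takes essentially the same route as the paper: both reduce $\dave(n)$ to a count of legal index sets with bounded largest index, both extract growth $r_1^n$ from a linear recurrence with characteristic septic $r^7-r^6-r^2-1$ (your relaxed count satisfies $B_n=B_{n-1}+B_{n-5}+B_{n-7}$, and the paper's $d_n$ obeys the very same relation via its noted alternative identity $c_n=d_{n-5}+b_n$ together with $b_n=d_{n-7}$), both sandwich the sum $\sum_{m<q_{n+1}}\dfq(m)$ between shifted counts using $\sum_{i=1}^{n}q_i=q_{n+5}-6$, and both finish by dividing by $q_{n+1}\sim\alpha_1\lambda_1^{n+1}$. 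The only variations are bookkeeping: you drop the $\{1,3\}$ rule and restore it by a shift argument, and you invoke Perron--Frobenius primitivity of a transfer matrix for positivity of the leading coefficient where the paper uses the cheaper comparison $d_n\ge q_n$ together with $\lambda_1>r_2$.
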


\begin{rek} At the cost of additional algebra one could prove the existence of a constant $C$ such that $\dave(n) \sim C \lambda^n$; however, as the interesting part of the above theorem is the exponential growth and not the multiplicative factor, we prefer to give the simpler proof which captures the correct growth rate. 
\end{rek}


We end with another new behavior. For many of the previous recurrences, the greedy algorithm successfully terminates in a legal decomposition; that is \emph{not} the case for the Fibonacci Quilt sequence. In \S\ref{sec:greedy} we prove the following.

\begin{thm}\label{thm:successgreedyalg} There is a computable constant $\rho \in (0,1)$ such that, as $n\to\infty$, the percentage of positive integers in $[1,q_n)$ where the greedy algorithm terminates in a Fibonacci Quilt legal decomposition converges to $\rho$. This constant is  approximately .92627.
\end{thm}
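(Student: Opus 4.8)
The plan is to show that the greedy algorithm's success is governed entirely by a bounded ``tail,'' and that the number of successful inputs satisfies the same linear recurrence as $\{q_n\}$ itself, which pins the success density to an explicit limit. \textbf{Step 1 (greedy peels large terms with gaps at least five).} First I would analyze the shape of a greedy run. Suppose $m\in[q_k,q_{k+1})$ with $k\ge 6$. Greedy selects $q_k$, the largest term not exceeding $m$, leaving remainder $r=m-q_k$. By \eqref{eq:rr1} we have $q_{k+1}=q_k+q_{k-4}$, so $0\le r<q_{k-4}$; hence the largest term not exceeding $r$ has index at most $k-5$, and every later greedy pick (all drawn from $[0,q_{k-4})$) has index at most $k-5$ as well. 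Thus the gap between $q_k$ and any later summand is at least $5$, a legal gap, and $q_k\notin\{q_1,q_3\}$. Consequently greedy on $m$ is legal if and only if greedy on $r$ is legal. Iterating peels off all summands of index $\ge 6$ with mutually legal spacing, leaving a remainder $\rho_0\in\{0,1,\dots,6\}$.

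\textbf{Step 2 (the unique failure seed) and Step 3 (a counting recurrence).} Next I would check the seven base values directly: for $\rho_0\in\{0,1,2,3,4,5\}$ greedy uses at most the single term $q_{\rho_0}$, which is legal, whereas for $\rho_0=6$ greedy selects $q_5+q_1=5+1$, whose indices differ by $4$, an illegal gap. Combining with Step 1, writing $f(m)=1$ when greedy on $m$ succeeds and $0$ otherwise, we obtain the self-similar law $f(m)=f(m-q_k)$ on $[q_k,q_{k+1})$ for $k\ge 6$, with base data $f(0)=\cdots=f(5)=1$ and $f(6)=0$. Setting $T(n)=\#\{m\in[0,q_n):f(m)=1\}$ and partitioning $[0,q_n)=\{0\}\cup\bigcup_{k=1}^{n-1}[q_k,q_{k+1})$, the number of successes in $[q_k,q_{k+1})$ equals $T(k-4)$ for each $k\ge 6$. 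Summing and differencing consecutive values yields, for all sufficiently large $n$,
\[
T(n)=T(n-1)+T(n-5),
\]
which is exactly the recurrence obeyed by $\{q_n\}$ (equation \eqref{eq:rr1} rewritten as $q_n=q_{n-1}+q_{n-5}$).

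\textbf{Step 4 (passing to the density).} Both $T(n)$ and $q_n$ solve $x_n=x_{n-1}+x_{n-5}$, whose characteristic polynomial factors as $x^5-x^4-1=(x^3-x-1)(x^2-x+1)$. The roots of $x^2-x+1$ lie on the unit circle and the two nonreal roots of $x^3-x-1$ have modulus below $1$, so the plastic number $\lambda_1\approx 1.32472$ is the unique root of maximal modulus and is simple. Hence $T(n)=c_T\lambda_1^n+O(1)$ and $q_n=c_q\lambda_1^n+O(1)$ with $c_T,c_q>0$, giving
\[
\frac{\#\{m\in[1,q_n):f(m)=1\}}{q_n-1}=\frac{T(n)-1}{q_n-1}\ \longrightarrow\ \rho:=\frac{c_T}{c_q}\in(0,1).
\]
Because the failure count $q_n-T(n)$ is itself a solution of the recurrence with strictly positive dominant coefficient (the seed $\rho_0=6$ recurs at positive density), $c_T<c_q$ and $\rho<1$. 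The constant $\rho$ is then computed from the explicit initial data, e.g.\ $T(6),\dots,T(11)=6,8,11,15,20,26$ against $q_6,\dots,q_{11}=7,9,12,16,21,28$, by extracting the coefficient of $\lambda_1^n$ in each Binet expansion; this produces $\rho\approx 0.92627$.

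\textbf{Main obstacle.} The crux is Step 1: proving rigorously that a greedy run on a large input can \emph{only} create an illegal gap inside the bounded tail, i.e.\ that the peeling is exact and the induced gaps among all high-index summands are genuinely at least $5$. This rests on the sharp bound $r<q_{k-4}$ from \eqref{eq:rr1}, together with a careful check of the boundary indices $k\in\{6,\dots,10\}$ where the peeled remainder first drops below $q_6=7$ and the recurrence may not yet govern the relevant terms. Once the reduction to a bounded remainder is secured, Steps 2--4 are bookkeeping plus a standard dominant-root argument.
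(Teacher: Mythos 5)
Your proposal is correct and takes essentially the same route as the paper: peeling the largest summand via $q_{k+1}=q_k+q_{k-4}$ to show greedy succeeds on $m\in[q_k,q_{k+1})$ iff it succeeds on the remainder $m-q_k<q_{k-4}$, which yields the homogeneous recurrence $T(n)=T(n-1)+T(n-5)$ (your inclusion of $m=0$ as a success is exactly the paper's homogenization $g_n=h_n+1$, shifted by one index), followed by the same dominant-root comparison for $(x^3-x-1)(x^2-x+1)$ against $q_n$. The one assertion you leave unproved, $c_T>0$, is filled in one line — $T(n)\ge n$ is unbounded, while $c_T=0$ would force $T(n)=O(1)$ since every non-dominant root has modulus at most $1$ — which is the same role played by the paper's observation that $g_n\ge 2g_{n-5}$ grows exponentially; your extra argument that $\rho<1$ via the failure count $q_n-T(n)$ satisfying the same recurrence with positive seed is a small addition the paper handles only numerically.
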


Interestingly, a simple modification of the greedy algorithm \emph{does} always terminate in a legal decomposition, and this decomposition yields  a minimal number of summands.

\begin{definition}[Greedy-6 Decomposition]\label{alg:greedy6} The Greedy-6 Decomposition writes $m$ as a sum of Fibonacci Quilt numbers as follows:
\begin{itemize}
\item if there is an $n$ with $m=q_n$ then we are done,
\item if $m = 6$ then we decompose $m$ as $q_4+q_2$ and we are done, and
\item if $m \geq q_6$ and $m \neq q_n$ for all $n \geq 1$, then we write $m=q_{\ell_1} + x$ where $q_{\ell_1}< m< q_{\ell_1+1}$ and $x > 0$, and then iterate the process with input $m :=x$.
\end{itemize}
We denote the decomposition that results from the Greedy-6 Algorithm by $\mathcal{G}(m)$.
\end{definition}

\begin{theorem}\label{thm:greedy6}
For all $m > 0,$ the Greedy-6 Algorithm results in a FQ-legal decomposition.  Moreover, if $\mathcal{G}(m) = q_{\ell_1}+q_{\ell_2}+\dots +q_{\ell_{t-1}}+q_{\ell_t}$ with $q_{\ell_1}>q_{\ell_2}>\cdots>q_{\ell_t}$,  then the decomposition satisfies exactly one of the following conditions:
\begin{enumerate}
\item $\ell_i-\ell_{i+1}\geq 5$ for all $i$ or
\item $\ell_i-\ell_{i+1}\geq 5$ for $i\leq t-3$ and $\ell_{t-2}\geq10,\, \ell_{t-1}=4, \,\ell_{t}=2$.
\end{enumerate}

Further, if $m=q_{\ell_1}+q_{\ell_2}+\cdots+q_{\ell_{t-1}}+q_{\ell_t}$ with $q_{\ell_1}>q_{\ell_2}>\cdots>q_{\ell_t}$  denotes a decomposition of $m$ where either
\begin{enumerate}
\item $\ell_i-\ell_{i+1}\geq 5$ for all $i$ or
\item $\ell_i-\ell_{i+1}\geq 5$ for $i\leq t-3$ and $\ell_{t-2}\geq10,\, \ell_{t-1}=4, \,\ell_{t}=2$,
\end{enumerate}
then $q_{\ell_1}+q_{\ell_2}+\cdots+q_{\ell_{t-1}}+q_{\ell_t}=\mathcal{G}(m)$. That is, the decomposition of $m$ is the Greedy-6 decomposition.
\end{theorem}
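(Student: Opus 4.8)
<br>

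The plan is to prove Theorem~\ref{thm:greedy6} in three parts: first that the Greedy-6 Algorithm terminates in a legal decomposition, second that the resulting index gaps satisfy conditions (1) or (2), and third the converse uniqueness statement. The backbone throughout is the two recurrences \eqref{eq:rr1} and \eqref{eq:rr2} together with the identity \eqref{eq:sum}, which let us translate between the size of $m$ and the indices that can appear.

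\textbf{Termination and gap structure.} The plan is to induct on $m$. For $m < q_6$ the finitely many cases can be checked by hand against the table in Figure~\ref{fig:fibquiltspiralseq}, and the identity $6 = q_4 + q_2$ handles the one exceptional value singled out in the algorithm. For $m \geq q_6$ with $m \neq q_n$, write $m = q_{\ell_1} + x$ where $q_{\ell_1} < m < q_{\ell_1 + 1}$, so $x = m - q_{\ell_1}$ satisfies $0 < x < q_{\ell_1 + 1} - q_{\ell_1} = q_{\ell_1 - 4}$ by \eqref{eq:rr1}. The crucial consequence is that the next greedily chosen index $\ell_2$ satisfies $q_{\ell_2} \leq x < q_{\ell_1 - 4}$, hence $\ell_2 \leq \ell_1 - 5$, which is exactly the gap bound in condition (1). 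One would then apply the inductive hypothesis to $x$, which by Greedy-6 decomposes with gaps $\geq 5$ (possibly ending in the exceptional tail $\ell_{t-1}=4,\ell_t=2$), and check that prepending $q_{\ell_1}$ with gap $\geq 5$ preserves FQ-legality: a gap of at least $5$ between consecutive indices forces $|\ell_i - \ell_j| \notin \{0,1,3,4\}$ for all pairs, and the only way the forbidden pair $\{1,3\}$ can arise is through the genuine tail, which is precisely why the exceptional case (2) with the explicit tail $q_4 + q_2$ is carved out. The bound $\ell_{t-2} \geq 10$ in case (2) needs a separate small argument ensuring the remainder after stripping the tail is large enough to have come from a high index.

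\textbf{Converse and uniqueness.} For the second half, suppose $m$ has a decomposition satisfying (1) or (2). The plan is to show the largest index $\ell_1$ appearing is forced to equal the greedy choice, i.e. $q_{\ell_1} \leq m < q_{\ell_1 + 1}$. The lower bound $q_{\ell_1} \leq m$ is immediate. For the upper bound one sums the tail: using the gap condition $\ell_i - \ell_{i+1} \geq 5$ and \eqref{eq:sum} (or a direct telescoping via \eqref{eq:rr1}), one shows $q_{\ell_2} + q_{\ell_3} + \cdots + q_{\ell_t} < q_{\ell_1 - 4} = q_{\ell_1 + 1} - q_{\ell_1}$, so that $m < q_{\ell_1 + 1}$. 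Granting this, $\ell_1$ is exactly the index selected at the first greedy step, and by induction on $t$ the entire decomposition coincides with $\mathcal{G}(m)$. The exceptional tail in case (2) must be handled as a base case of this induction, verifying that $q_4 + q_2 = 6$ is what Greedy-6 produces on remainder $6$.

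\textbf{Main obstacle.} I expect the hard part to be the tail bound $\sum_{i \geq 2} q_{\ell_i} < q_{\ell_1 - 4}$ under only the gap-$\geq 5$ hypothesis, since this is where the non-uniqueness of general FQ-legal decompositions could in principle sneak in; the whole point of restricting to the Greedy-6 gap structure is to eliminate exactly those competing decompositions (such as the three representations of $106$ noted after Theorem~\ref{uniquedecomp}). Making the geometric-series estimate tight enough requires the growth rate $\lambda_1$ from \eqref{eq:rr2}, and the exceptional case (2) is precisely the boundary configuration where a naive estimate fails, so the bookkeeping around that tail is the delicate point. The minimality-of-summands claim then follows as a corollary, since any legal decomposition with more summands would have to violate a gap bound and could be compressed using \eqref{eq:rr2}.
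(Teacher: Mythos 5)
Your proposal follows the paper's proof essentially step for step: the same strong induction on the intervals $[q_n,q_{n+1})$, where $q_{n+1}=q_n+q_{n-4}$ forces the remainder $x<q_{n-4}$ and hence the gap of at least $5$, and for the converse the same tail estimate $q_{\ell_2}+\cdots+q_{\ell_t}<q_{\ell_1-4}$, which is exactly the paper's Lemma \ref{lem:5sum} proved by the telescoping via \eqref{eq:rr1} that you mention (the paper handles the case-(2) tail by rewriting $q_4+q_2=q_5+q_1$ and using that $m$ cannot be a Fibonacci Quilt number, matching your ``delicate boundary'' bookkeeping, and your $\ell_{t-2}\geq 10$ condition is precisely the greedy constraint $6<q_{\ell_{t-2}-4}$ together with the paper's base-case verification up to $q_{17}$). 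One small correction: no growth-rate input from $\lambda_1$ is needed anywhere, since the elementary induction $q_{\ell-5}+q_{\ell-10}+\cdots<q_{\ell-4}$ from the recurrence already gives the tail bound.
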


Let $\mathcal{D}(m)$ be a given decomposition of $m$ as a sum of Fibonacci Quilt numbers (not necessarily legal): \be m \ = \ c_1 q_1 + c_2 q_2 + \cdots + c_n q_n, \ \ \ c_i \in \{0, 1, 2, \dots\}. \ee
We define 
the number of summands by \be \#{\rm summands}(\mathcal{D}(m)) \ := \ c_1 + c_2 + \cdots + c_n. \ee

\begin{thm}\label{thm:Dm} If $\mathcal{D}(m)$ is any decomposition of $m$ as a sum of Fibonacci Quilt numbers, then \be \#{\rm summands}(\mathcal{G}(m)) \ \le \ \#{\rm summands}(\mathcal{D}(m)).\ee
\end{thm}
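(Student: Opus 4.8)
The plan is to prove that the Greedy-6 decomposition $\mathcal{G}(m)$ achieves the minimum possible number of summands among all decompositions $\mathcal{D}(m)$ of $m$ into Fibonacci Quilt numbers. I would proceed by strong induction on $m$, using the recurrence relations from Theorem~\ref{thm:rrfibquilt} to reduce the summand count of any competing decomposition without increasing it.

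First I would establish base cases for small $m$ (in particular the values $m < q_6$ and the special case $m=6$) by direct inspection, verifying that $\mathcal{G}(m)$ is minimal there. For the inductive step, suppose $\mathcal{G}(m) = q_{\ell_1} + \cdots + q_{\ell_t}$ with largest index $\ell_1$, so that $q_{\ell_1} \le m < q_{\ell_1 + 1}$ by construction of the greedy algorithm. Given any decomposition $\mathcal{D}(m) = \sum_i c_i q_i$, the key is to argue that we may assume its largest-indexed summand is also $q_{\ell_1}$; then subtracting $q_{\ell_1}$ from both reduces to a comparison between $\mathcal{G}(m - q_{\ell_1}) = \mathcal{G}(m)\setminus\{q_{\ell_1}\}$ (using the fact that iterating greedy is consistent) and a decomposition of $m - q_{\ell_1}$, to which the induction hypothesis applies.

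The main obstacle, and the heart of the argument, is the \emph{exchange/reduction lemma}: I must show that any decomposition not already using the greedy leading term can be rewritten, without increasing $\#{\rm summands}$, so that it does use it. The natural tool is the recurrence $q_{n+1} = q_n + q_{n-4}$ from \eqref{eq:rr1} and $q_{n+1} = q_{n-1} + q_{n-2}$ from \eqref{eq:rr2}, read \emph{in reverse}: a pair of smaller summands whose indices differ appropriately can be consolidated into one larger summand, strictly decreasing the count by one, while repeated copies of a single $q_n$ (i.e. $c_i \ge 2$) can likewise be collapsed via these relations. The delicate part is verifying that such consolidation steps, applied greedily from the top, always allow us to reach a summand of index $\ell_1$ without ever exceeding $m$ or needing a term of index greater than $\ell_1$ (which is impossible since $q_{\ell_1+1} > m$); this requires a careful bookkeeping argument showing the total weight $\sum_i c_i q_i = m$ together with $c_i \ge 0$ forces enough mass at or below index $\ell_1$ to perform the exchange.

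Once the reduction lemma is in hand, the induction closes cleanly: since $\#{\rm summands}$ is nonincreasing under each exchange and the resulting decomposition shares its top term with $\mathcal{G}(m)$, peeling off $q_{\ell_1}$ and invoking the inductive hypothesis on $m - q_{\ell_1} < m$ yields $\#{\rm summands}(\mathcal{G}(m)) \le \#{\rm summands}(\mathcal{D}(m))$. I would also remark that this minimality, combined with the structural characterization of $\mathcal{G}(m)$ in Theorem~\ref{thm:greedy6} (gaps $\ell_i - \ell_{i+1} \ge 5$, with a single permitted exceptional tail), is what makes the exchange argument terminate: the greedy output is precisely the ``maximally consolidated'' decomposition, so no further weight-preserving consolidation is possible.
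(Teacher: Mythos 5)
Your proposal has the right overall shape---rewrite an arbitrary decomposition into the greedy one by sum-preserving moves that never increase the summand count, then invoke the uniqueness half of Theorem~\ref{thm:greedy6} to identify the terminal object with $\mathcal{G}(m)$---but the toolkit you list is insufficient, and the insufficiency sits exactly at the step you defer to ``careful bookkeeping.'' Reading \eqref{eq:rr1} and \eqref{eq:rr2} in reverse only merges pairs at index gaps $4$ and $1$ (namely $q_n+q_{n-4}\to q_{n+1}$ and $q_{n-1}+q_{n-2}\to q_{n+1}$). Pairs at gaps $0$, $2$, $3$ cannot be merged into a single term at all, and these are precisely what block you from reaching the greedy leading term: for example, $m=q_n+q_{n-2}$ is FQ-legal and is a fixed point of your consolidations, yet its top index is $n$ while the greedy algorithm picks $q_{n+1}$, since $q_n+q_{n-2}=q_{n+1}+q_{n-5}$ and $q_{n+1}\le m<q_{n+2}$. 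So your exchange lemma is false if only count-\emph{decreasing} moves are allowed; one needs the count-\emph{preserving} exchanges $2q_n=q_{n+2}+q_{n-5}$, $q_n+q_{n-2}=q_{n+1}+q_{n-5}$, and $q_n+q_{n-3}=q_{n+1}+q_{n-8}$ (Lemma~\ref{lem:FQreln}), together with an explicit list of small-index exceptional replacements (e.g.\ $2q_6=q_8+q_2$, $q_5+q_2=q_6$, $q_3+q_1=q_4$) that the general identities do not cover. In the same vein, your claim that repeated copies $c_i\ge 2$ ``can likewise be collapsed'' is not correct as stated: $2q_n$ is generically \emph{not} a Fibonacci Quilt number, and the doubling move is a two-summands-to-two-summands exchange, not a collapse.

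Once these two-to-two moves are admitted, termination no longer follows from the summand count, which they leave unchanged; this is the second missing piece. The paper's proof supplies a secondary monovariant: every move either decreases the number of summands or leaves it fixed while weakly decreasing the index sum, and for the three exceptional identities that preserve both ($q_5+q_3=q_7+q_1$, $q_4+q_2=q_5+q_1$, $2q_3=q_5+q_1$) one tracks the number of summands among $q_2,\dots,q_5$, which strictly decreases. Your closing appeal to ``the greedy output is the maximally consolidated decomposition'' is the right endgame---the terminal configuration has all gaps $\ge 5$ up to the tail $q_5+q_1$, which is swapped for $q_4+q_2$ so that the second half of Theorem~\ref{thm:greedy6} identifies it as $\mathcal{G}(m)$---but it presupposes termination rather than proving it. Finally, note that with the full move set your induction on $m$ and the peeling of the top term become superfluous: since the moves preserve the total (so no intermediate index can exceed $\ell_1$, as $m<q_{\ell_1+1}$), the paper simply runs the rewriting process once, globally. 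Your ``exchange/reduction lemma,'' correctly formulated and proved, \emph{is} the entire theorem, and the paper's five-move list plus its termination argument is the content your sketch still owes.
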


\subsubsection{Gaussian Behavior of Number of Summands in $(s,b)$-Generacci  legal decompositions}

Below we report on the distribution of the number of summands in the $(s,b)$-Generacci legal decompositions. In attacking this problem we developed a new technique similar to ones used before but critically different in that we are able to bypass technical assumptions that other papers needed to prove a Gaussian distribution. We elaborate on this method in \cite{CFHMNPX}, where we also determine the distribution of gaps between summands. We have chosen to concentrate on the Fibonacci Quilt results in this paper, and just state many of the $(s,b)$-Generacci outcomes, as we see the same behavior as in other systems for the $(s,b)$-Generacci numbers, but see fundamentally new behavior for the Fibonacci Quilt sequence.

\begin{theorem}[Gaussian Behavior of Summands for $(s,b)$-Generacci]\label{thm:gaussian}
Let the random variable $Y_n$ denote the number of summands in the (unique) $(s,b)$-Generacci legal decomposition of an integer picked at random from $[0, a_{bn+1})$ with uniform probability.\footnote{Using the methods of \cite{BDEMMTTW}, these results can be extended to hold almost surely for sufficiently large sub-interval of $[a_{(n-1)b+1}, a_{bn+1})$.}
Then for $\mu_n$ and $\sigma_n^2$, the mean and variance of $Y_n$, we have
\begin{align} \mu_n & \ = \   An+B+o(1)\label{muConstantA} \\
\sigma_n^2 & \ = \   Cn+D+o(1)
 \end{align} for some positive constants $A,B,C,D$.
Moreover if we
normalize $Y_n$ to $Y_n' = (Y_n - \mu_n)/\sigma_n$,  then $Y_n'$ converges in distribution to the standard normal distribution as $n \rightarrow \infty$.
\end{theorem}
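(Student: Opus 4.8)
The plan is to establish Gaussian behavior for $Y_n$, the number of summands in the unique $(s,b)$-Generacci decomposition of an integer drawn uniformly from $[0, a_{bn+1})$, via a generating-function / transfer-matrix analysis of the summand statistics. First I would count, for each $n$ and each $k$, the number of integers in $[0,a_{bn+1})$ whose legal decomposition uses exactly $k$ summands; call this $p_{n,k}$. Because the decomposition rule is local (a chosen summand blocks its own bin and the $s$ bins on each side), these counts satisfy a linear recurrence: conditioning on whether the topmost bin $\mathcal{B}_n$ contributes a summand, and if so which of its $b$ elements, yields a relation expressing $p_{n,k}$ in terms of $p_{n-1,k}$ (no summand from the top bin) and $p_{n-s-1,k-1}$ weighted by $b$ (a summand from the top bin, forcing the next $s$ bins to be empty). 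Packaging this into the two-variable generating function $F(x,y) = \sum_{n,k} p_{n,k}\, x^n y^k$, the recurrence becomes a rational function in $x$ whose denominator is, up to normalization, $1 - x^b - b\, y\, x^{(s+1)b}$, directly mirroring the characteristic polynomial $\lambda^{(s+1)b} - \lambda^{sb} - b$ of Theorem~\ref{thrm:recurrence}.

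Next I would extract the moment asymptotics from this generating function. Fixing the summand-counting variable $y$ near $1$, the dominant singularity of $x \mapsto F(x,y)$ is the smallest root $x(y)$ of the denominator, and standard singularity analysis gives $\sum_k p_{n,k} y^k \sim g(y)\, x(y)^{-n}$ for a smooth nonvanishing $g$. The normalized random variable $Y_n$ then has moment generating function $\mathbb{E}[e^{t Y_n}]$ governed by $x(e^t)/x(1)$, and differentiating the implicit relation defining $x(y)$ at $y=1$ produces the mean $\mu_n = An + B + o(1)$ and variance $\sigma_n^2 = Cn + D + o(1)$, with $A,C>0$ provided the relevant second derivative (equivalently, the variance of the single-step contribution) is strictly positive. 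Positivity of $C$ is where one must be slightly careful, but it follows because the per-bin summand indicator is genuinely random and not deterministic, so I would verify $C > 0$ by checking the curvature of $\log x(y)$ at $y=1$ is nonzero.

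Finally, to upgrade pointwise moment convergence to convergence in distribution, I would invoke a quasi-power / Lévy-continuity argument: since $\log \mathbb{E}[e^{t Y_n}] = -n \log x(e^{t/\sigma_n} \cdot \text{(centering)}) + O(1)$ has the Hwang quasi-power form with a growing ``variance'' $n$, the classical quasi-power theorem (or equivalently a direct computation showing the characteristic function of $Y_n'=(Y_n-\mu_n)/\sigma_n$ converges to $e^{-u^2/2}$) gives convergence to the standard normal, together with the stated mean and variance expansions. The main obstacle I anticipate is \emph{not} the limit theorem itself but the bookkeeping needed to set up the counting recurrence correctly near the boundary: one must handle the edge effects of the top few bins (the convention $\mathcal{B}_j = \emptyset$ for $j \le 0$ and partial contributions from the highest bin in integers just below $a_{bn+1}$) so that the $O(1)$ terms $B$ and $D$ are genuinely constant and the error is $o(1)$. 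This is precisely the sort of technical assumption the authors claim to bypass with their new method in \cite{CFHMNPX}, so I would either follow that cleaner approach or absorb the boundary discrepancies into the lower-order terms, which is legitimate since they affect only finitely many bins and hence contribute $O(1)$ to $\mu_n$ and $\sigma_n^2$.
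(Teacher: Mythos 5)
You should know at the outset that this paper contains no proof of Theorem \ref{thm:gaussian} to compare against: the authors state the result and explicitly defer the argument to the companion paper \cite{CFHMNPX}, where they say they develop new combinatorial techniques precisely to bypass the analytic technicalities that arise when the leading recurrence coefficient vanishes. Your proposal is therefore necessarily a different route, and in outline it does close, for a reason worth making explicit: although the recurrence $a_n = a_{n-b} + b\,a_{n-(s+1)b}$ on the sequence itself has leading term zero, your counting recurrence $p_{n,k} = p_{n-1,k} + b\,p_{n-s-1,k-1}$ lives at the level of \emph{bins}, where it is an honest positive linear recurrence with leading coefficient $1$ (summing over $k$ recovers $a_{bn+1} = a_{b(n-1)+1} + b\,a_{b(n-s-1)+1}$, consistent with Theorem \ref{thrm:recurrence} and Lemma \ref{jthinbinRR}). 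At the bin level the standard machinery you invoke (bivariate rational generating function, movable dominant singularity $x(y)$, Hwang quasi-powers) is legitimately available, and the root control it requires is in fact already supplied by Appendix \ref{sec:genbinetformSBGen}: the auxiliary polynomial $q(y) = y^{s+1} - y^s - b$ has distinct roots and a unique, simple, strictly dominant positive root, and since the step lengths $1$ and $s+1$ of your recurrence are coprime, the dominant root stays isolated for $y$ in a complex neighborhood of $1$, giving the uniform quasi-power expansion and hence the central limit theorem with $\mu_n = An + B + o(1)$ and $\sigma_n^2 = Cn + D + o(1)$.

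Two corrections to your write-up. First, your denominator is off: with $x$ marking bins, the generating function has denominator $1 - x - b\,y\,x^{s+1}$, not $1 - x^b - b\,y\,x^{(s+1)b}$; at $y=1$ its poles are the reciprocals of the roots of $q(y)$ above, i.e., of the $b$\textsuperscript{th} powers $\omega_j^b$ of the roots of the full characteristic polynomial, and it is exactly this reduction to $q$ (the same reduction used in the appendix) that makes the Perron-type analysis go through. Second, your anticipated obstacle, boundary effects near the top bin, is a non-issue for the theorem as stated: the interval $[0, a_{bn+1})$ ends exactly at the first element of $\mathcal{B}_{n+1}$, so by uniqueness of decompositions (Theorem \ref{uniquedecomp}) and the fact that the largest integer decomposable from bins $\mathcal{B}_1,\dots,\mathcal{B}_{n-s-1}$ is $a_{b(n-s-1)+1}-1$, the integers in the interval are in exact bijection with legal configurations on bins $\mathcal{B}_1,\dots,\mathcal{B}_n$ and the recurrence for $p_{n,k}$ holds exactly; partial-interval effects only arise for the sub-interval refinement mentioned in the footnote (handled via \cite{BDEMMTTW}). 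What you still owe is the strict positivity $C>0$: the quasi-power theorem does not exclude degenerate variance for you, so you must carry out the implicit differentiation of $x + b\,y\,x^{s+1} = 1$ at $y=1$ and verify that the curvature of $-\log x(e^t)$ at $t=0$ is nonzero. That is a finite computation, but until it is written out the proof is incomplete at the one point the theorem actually asserts positivity.
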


Unfortunately, the above methods do not directly generalize to Gaussian results for the Fibonacci Quilt sequence. Interestingly and fortunately there is a strong connection between the two sequences, and in \cite{CFHMNPX} we show how to interpret many questions concerning the Fibonacci Quilt sequence to a weighted average of several copies of the $(4,1)$-Generacci sequence. This correspondence is not available for questions on unique decomposition, but does immediately yield Gaussian behavior and determines the limiting behavior of the individual gap measures.



\section{Recurrence Relations}\label{sec:recrelations}

\subsection{Recurrence Relations for the $(s,b)$-Generacci Sequence}\label{sec:proofofrecurrence}

Recall  that for  $s,b\geq 1$, an $(s,b)$-Generacci decomposition of a positive integer is legal if the following conditions hold.
\begin{enumerate}
\item No term $a_i$ is used more than once.
\item No two distinct terms $a_i$, $a_j$ in a decomposition can have indices $i$, $j$ from the same bin.
\item If $a_i$ and $a_j$ are summands in a legal decomposition, then there are at least $s$ bins between them.
\end{enumerate}

The terms of the $(s,b)$-Generacci sequence can be pictured as follows:
\be \underbracket{a_1,\ldots,a_{b}}_{\mathcal{B}_1}\ ,\underbracket{a_{1+b},\ldots,a_{2b}}_{\mathcal{B}_2}\ ,\ldots,\ \underbracket{a_{1+nb},\ldots,a_{(n+1)b}}_{\mathcal{B}_{n+1}}\ ,\underbracket{a_{1+(n+1)b},\ldots,a_{(n+2)b}}_{\mathcal{B}_{n+2}} \ , \underbracket{a_{1+(n+2)b},\ldots,a_{(n+3)b}}_{\mathcal{B}_{n+3}}\ ,\ldots.\ee


We now prove the following results related to the elements of the $(s,b)$-Generacci sequence.

\begin{lemma}\label{l1}
If $s,b\geq 1$, then $a_i=i$ for all $1\leq i\leq (s+1)b+1$, where $a_i$ is the $i$\textsuperscript{{\rm th}} term in the $(s,b)$-Generacci sequence.
\end{lemma}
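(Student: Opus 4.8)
The plan is to prove this by strong induction on $i$, showing that each of the first $(s+1)b+1$ integers is forced to equal its index because no smaller legal decomposition exists. The key observation is that the legality rule, condition (3), forbids using any two summands within $s$ bins of each other; since the first $(s+1)b+1$ terms span only $s+1$ bins plus one extra element, any two of them lie too close together to be combined legally. Concretely, the terms $a_1,\ldots,a_{(s+1)b}$ fill bins $\mathcal{B}_1,\ldots,\mathcal{B}_{s+1}$, and between bin $\mathcal{B}_1$ and bin $\mathcal{B}_{s+1}$ there are only $s$ intervening bins, which is exactly the minimum required separation minus one, so no two elements from these $s+1$ bins may appear together in a legal decomposition.

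First I would establish the base case $a_1 = 1$: since there are no prior terms, the smallest positive integer with no legal decomposition is $1$, forcing $a_1 = 1$. Then, for the inductive step, I would assume $a_j = j$ for all $j < i$ where $i \le (s+1)b+1$, and argue that $a_i = i$. By the inductive hypothesis the available terms are exactly $\{1, 2, \ldots, i-1\}$, occupying bins $\mathcal{B}_1, \ldots, \mathcal{B}_{\lceil (i-1)/b \rceil}$. Because $i-1 \le (s+1)b$, all these terms live within $s+1$ consecutive bins, so condition (3) means any legal decomposition can use \emph{at most one} of them. Hence the only legally representable integers using $\{1,\ldots,i-1\}$ are the single terms $1, 2, \ldots, i-1$ themselves, and the smallest positive integer lacking a legal decomposition is therefore $i$, giving $a_i = i$.

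The main obstacle, and the point requiring the most care, is verifying the bin-separation bookkeeping precisely at the boundary $i = (s+1)b+1$. At this index the term $a_{(s+1)b+1}$ would be the first element of bin $\mathcal{B}_{s+2}$, and I must confirm that it still cannot be combined with any earlier term: the earliest available term sits in $\mathcal{B}_1$, and between $\mathcal{B}_1$ and $\mathcal{B}_{s+2}$ there are exactly $s$ bins ($\mathcal{B}_2,\ldots,\mathcal{B}_{s+1}$), so pairing $\mathcal{B}_1$ with $\mathcal{B}_{s+2}$ is legal. This means that to rule out a nontrivial decomposition of $i = (s+1)b+1$ I cannot rely on the "all within $s+1$ bins" argument; instead I should check directly that no sum of two available terms equals $(s+1)b+1$. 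Since every available term is at most $(s+1)b$ and at least $1$, the only candidate legal two-term sums come from one element of $\mathcal{B}_1$ and one sufficiently far element, but the smallest such legal sum would need to exceed the largest available value in a way that overshoots $(s+1)b+1$; I would confirm that the smallest legal two-summand total already equals or exceeds $(s+1)b+1$ only by using $a_1 = 1$ plus the first element of $\mathcal{B}_{s+2}$, which is not yet defined, so no legal representation of $(s+1)b+1$ exists among $\{1,\ldots,(s+1)b\}$. Making this edge case airtight—distinguishing exactly when two available terms become far enough apart to combine—is the step I expect to demand the most attention, and the clean resolution is that for $i \le (s+1)b+1$ the set of representable values is precisely the singletons, forcing $a_i = i$ throughout.
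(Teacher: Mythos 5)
Your proof is correct and takes essentially the same approach as the paper, which disposes of the lemma in one line by observing that no two elements from the first $s+1$ bins can appear together in a legal decomposition, so only singleton decompositions exist and $a_i=i$ is forced for $1\leq i\leq (s+1)b+1$. Your worry at the boundary $i=(s+1)b+1$ dissolves immediately once you note (as you ultimately do) that what matters is where the \emph{available} terms $\{1,\dots,(s+1)b\}$ sit --- all within bins $\mathcal{B}_1,\dots,\mathcal{B}_{s+1}$ --- and not where $a_i$ itself would land, so the uniform ``at most one summand'' argument covers that index with no special casing.
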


\begin{proof}This follows directly from the definition of the $(s,b)$-Generacci sequence.
That is, we  note that at the $(s+1)$\textsuperscript{th}-bin, we clearly have $s$-many bins to the left, yet we are unable to use any elements from those bins to decompose any new integers. Thus $a_{i}=i$, for all $1\leq i\leq (s+1)b+1$.
\end{proof}



\begin{lemma}
If $k$ can be decomposed using summands $\{a_1,\ldots,a_p\}$, then so can $k-1$.
\end{lemma}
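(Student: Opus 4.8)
The plan is to prove the statement by showing that any legal decomposition of $k$ can be modified into a legal decomposition of $k-1$. The key observation is that the statement should really be read as: if $k$ has an $(s,b)$-Generacci legal decomposition using only summands from $\{a_1,\ldots,a_p\}$, then so does $k-1$. By Lemma~\ref{l1} we know $a_1 = 1$, so the natural strategy is to examine whether the smallest element $a_1$ is already used as a summand in the given decomposition of $k$.

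\textbf{Case analysis on the use of $a_1$.} First I would suppose $k = a_{\ell_1} + \cdots + a_{\ell_t}$ is a legal decomposition using indices in $\{1,\ldots,p\}$, ordered so that $a_{\ell_1} > \cdots > a_{\ell_t}$. If $a_{\ell_t} = a_1 = 1$, then the simplest move is to delete this summand: $k - 1 = a_{\ell_1} + \cdots + a_{\ell_{t-1}}$ is automatically a legal decomposition, since deleting a summand cannot violate any of the three legality conditions (no repeated index, no two indices in the same bin, at least $s$ bins between any surviving pair), and all surviving indices still lie in $\{1,\ldots,p\}$. The genuinely interesting case is when $a_1$ is \emph{not} among the summands. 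Here I would try to argue that the bin $\mathcal{B}_1$ is ``free'' to receive a summand: specifically, if none of the summands lie in $\mathcal{B}_1$, I want to replace the smallest summand $a_{\ell_t}$ by $a_{\ell_t} - 1$, or more robustly, observe that since $a_1 = 1$ is available and $\mathcal{B}_1$ contains no current summand, I can reduce $k$ by $1$ by inserting $a_1$ into the decomposition of $k-1$.

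\textbf{The main obstacle.} The subtle point, and what I expect to be the crux, is the $s$-bin spacing condition: inserting $a_1 \in \mathcal{B}_1$ is only legal if the next-smallest summand $a_{\ell_t}$ lies in a bin $\mathcal{B}_j$ with $j \geq s+2$ (so that at least $s$ bins separate $\mathcal{B}_1$ from $\mathcal{B}_j$). If instead $a_{\ell_t}$ sits in one of the low bins $\mathcal{B}_2, \ldots, \mathcal{B}_{s+1}$, then $a_1$ cannot simply be appended. To handle this I would argue by a descent on the smallest summand: if $a_{\ell_t} \in \mathcal{B}_j$ for some $2 \le j \le s+1$, then by Lemma~\ref{l1} the relevant small terms equal their indices, and I can replace $a_{\ell_t}$ by a smaller legal term, iterating until either I land on $a_1$ itself (giving the deletion case) or I free up $\mathcal{B}_1$. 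An alternative and cleaner route is to induct on $k$: decompose $k-1$ directly using the greedy/least-element structure guaranteed by the definition of the sequence, using the fact (from Theorem~\ref{uniquedecomp}, or provable directly) that every integer below the span of $\{a_1,\ldots,a_p\}$ has a legal decomposition within these summands, and that the set of representable integers is exactly an initial segment $\{0,1,\ldots,M\}$.

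\textbf{Preferred clean approach.} I would ultimately favor proving the stronger structural fact that the integers decomposable using $\{a_1, \ldots, a_p\}$ form a contiguous block $\{0, 1, 2, \ldots, M_p\}$ for some $M_p$; the present lemma is then an immediate corollary, since $k \le M_p$ forces $k - 1 \le M_p$. This contiguity follows naturally from the construction in Definition~\ref{sbDefi}, where each $a_i$ is chosen as the smallest integer \emph{not} yet representable, so that the representable set grows without gaps. The work, then, reduces to verifying that adjoining a single new term $a_{p+1}$ to a contiguous representable block $\{0,\ldots,M_p\}$ again yields a contiguous block, which is where the bin-spacing bookkeeping enters but is tractable given Lemma~\ref{l1}.
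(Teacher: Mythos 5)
Your preferred final approach is correct in outline and takes a genuinely different route from the paper. The paper's proof is a local surgery: writing $k=a_{\ell_1}+\cdots+a_{\ell_t}$ with $\ell_1>\cdots>\ell_t$, it replaces the smallest summand by observing that $a_{\ell_t}-1$ is either $0$ or has a legal decomposition using only indices strictly below $\ell_t$ --- this is exactly the defining minimality of $a_{\ell_t}$ in Definition \ref{sbDefi} --- and splicing that decomposition in place of $a_{\ell_t}$ stays legal because every new summand lies in $a_{\ell_t}$'s bin or an earlier one, hence is at least as far from the bin of $a_{\ell_{t-1}}$ as $a_{\ell_t}$ was. Your contiguity argument instead proves the equivalent global fact that the set of integers representable from $\{a_1,\ldots,a_p\}$ is an initial segment $\{0,1,\ldots,M_p\}$. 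That induction does close, but you should make its two hinges explicit: (i) $a_{p+1}=M_p+1$, again by the defining minimality (this is what guarantees no gap opens between the old block and the numbers using $a_{p+1}$); and (ii) the numbers whose decomposition uses $a_{p+1}\in\mathcal{B}_j$ are exactly $a_{p+1}+\{0,1,\ldots,M_{p'}\}$ with $p'=(j-s-1)b$ (and $M_{p'}=0$ when $j\le s+1$), which requires the \emph{strong} induction hypothesis at the smaller index $p'$, not just at $p$; note that Lemma \ref{l1} is only needed for the base case, not for the bookkeeping itself. Done this way, your version yields slightly more than the lemma --- it is precisely the statement the paper extracts immediately afterwards, that the smallest integer with no legal decomposition is one more than the largest integer that has one --- at the cost of being longer than the paper's one-paragraph surgery.

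Your first thread, however, should be discarded. As written, ``inserting $a_1$'' into a decomposition increases the represented value by $1$ rather than decreasing it, and the fallback of replacing the smallest summand $a_{\ell_t}$ by $a_{\ell_t}-1$ as a single sequence term relies on $a_i=i$, which Lemma \ref{l1} guarantees only for $i\le(s+1)b+1$; for a smallest summand beyond that initial segment, $a_{\ell_t}-1$ is not a term of the sequence, and the proposed descent has no next step. The repair is exactly the paper's move: subtract $1$ from the smallest summand and re-decompose $a_{\ell_t}-1$ legally using strictly smaller indices, which cannot clash with $a_{\ell_1},\ldots,a_{\ell_{t-1}}$.
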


\begin{proof}
Let $k=a_{\ell_1}+a_{\ell_2}+\cdots+a_{\ell_t}$ with $\ell_1>\ell_2>\cdots>\ell_t$ be a legal decomposition of $k$. So $k-1=a_{\ell_1}+a_{\ell_2}+\cdots+(a_{\ell_t}-1)$.

It must be the case that either $a_{\ell_t}-1$ is zero or it has a legal decomposition with summands indexed smaller than $\ell_t$, as  $a_{\ell_t}$ was added because it was the smallest integer that could not be legally decomposed with summands indexed smaller than $\ell_t$. If $\ell_t$ was sufficiently distant from $\ell_{t-1}$ for the decomposition of $k$ to be legal, using summands with even smaller indices does not create an illegal interaction with the remaining summands $a_{\ell_1},\ldots,a_{\ell_{t-1}}$.
\end{proof}

This lemma allows us to conclude that the smallest integer that does not have a legal decomposition using $\{a_1,\ldots,a_n\}$ is one more than the largest integer that does have a legal decomposition using $\{a_1,\ldots,a_n\}$.

\begin{lemma}\label{jthinbinRR}If $s,b,n\geq 1$ and $1\leq j\leq b+1$, then
\begin{align}
a_{j+nb}&\ = \ a_{1+nb}+(j-1)a_{1+(n-s)b}.\label{inbinRR}
\end{align}
\end{lemma}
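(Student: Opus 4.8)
The plan is to reduce the closed form to a first-order in-bin recurrence and then telescope. Write $M_p$ for the largest integer admitting an $(s,b)$-Generacci legal decomposition using only $\{a_1,\dots,a_p\}$, with the convention $M_p = 0$ for $p \le 0$. The preceding lemma shows that the legally decomposable integers form the initial segment $\{0,1,\dots,M_p\}$, so by Definition~\ref{sbDefi} we have $a_{p+1} = M_p + 1$; in particular $a_{1+(n-s)b} = M_{(n-s)b}+1$, which also fixes the natural reading $a_{1+(n-s)b}=1$ when $n\le s$.

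The heart of the argument is the structural identity: if $a_p$ lies in bin $\mathcal{B}_{m}$, then
\be M_p \ = \ a_p + M_{(m-s-1)b}. \ee
First I would note that $a_p + M_{(m-s-1)b}$ is attainable: take $a_p$ as the top summand and adjoin a value-maximal legal decomposition of $M_{(m-s-1)b}$ drawn from $\{a_1,\dots,a_{(m-s-1)b}\}$. Those summands sit in bins $\mathcal{B}_1,\dots,\mathcal{B}_{m-s-1}$, each separated from $\mathcal{B}_m$ by more than $s$ bins, so no forbidden window $\mathcal{B}_{j-s}\cup\dots\cup\mathcal{B}_j$ can contain both $a_p$ and another summand; the decomposition is legal, giving $M_p \ge a_p + M_{(m-s-1)b}$. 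For the reverse inequality I would argue by induction on $p$ that a value-maximal decomposition must use $a_p$: any decomposition avoiding $a_p$ has value at most $M_{p-1}$, and the inductive form $M_{p-1}=a_{p-1}+M_{(m'-s-1)b}$ (with $m'\le m$ the bin index of $a_{p-1}$), together with $a_{p-1}<a_p$ and monotonicity of $M$, shows $M_{p-1} < a_p + M_{(m-s-1)b}$; while any decomposition using $a_p$ confines its remaining summands to $\mathcal{B}_1,\dots,\mathcal{B}_{m-s-1}$ and so has value at most $a_p + M_{(m-s-1)b}$. The base cases, where $m\le s+1$ so that $M_{(m-s-1)b}=0$, follow from Lemma~\ref{l1}, which forces at most one summand and hence $M_p = a_p = p$.

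Applying this identity with $p = j+nb$ (so $m = n+1$ for $1\le j\le b$) yields $M_{j+nb} = a_{j+nb} + M_{(n-s)b} = a_{j+nb} + a_{1+(n-s)b} - 1$, and hence the in-bin recurrence
\be a_{(j+1)+nb} \ = \ M_{j+nb} + 1 \ = \ a_{j+nb} + a_{1+(n-s)b}, \qquad 1 \le j \le b. \ee
Telescoping this from $j=1$ (the final step $j=b$ producing the first element $a_{(b+1)+nb}$ of the next bin) gives $a_{j+nb} = a_{1+nb} + (j-1)a_{1+(n-s)b}$ for all $1\le j\le b+1$, as claimed.

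I expect the main obstacle to be the upper bound $M_p \le a_p + M_{(m-s-1)b}$, that is, proving rigorously that a value-maximal legal decomposition always includes the largest available term. This is where the separation rule and the induction must be handled with care: one must verify that declining to use $a_p$ cannot be compensated by the extra bins $\mathcal{B}_{m-s},\dots,\mathcal{B}_{m-1}$ that thereby become available, which is exactly the point at which monotonicity of $M$ and the inductive closed form are needed.
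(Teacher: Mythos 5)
Your proposal is correct and takes essentially the same route as the paper: the paper likewise derives the in-bin recurrence $a_{(j+1)+nb} = a_{j+nb} + a_{1+(n-s)b}$ from the observation that the largest integer legally decomposable using bins $\mathcal{B}_1,\dots,\mathcal{B}_{n-s}$ is $a_{1+(n-s)b}-1$ (resting on the preceding lemma's initial-segment property, i.e., $a_{p+1}=M_p+1$ in your notation) and then chains it across the bin, which is exactly your telescoping step. One remark: your inductive argument that a value-maximal decomposition must use $a_p$ can be collapsed to one line, since $a_p = M_{p-1}+1$ gives $M_{p-1} = a_p - 1 < a_p + M_{(m-s-1)b}$ directly, with no need for the closed form of $M_{p-1}$ or monotonicity of $M$.
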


\begin{proof}
The term $a_{1+nb}$ is the first entry in the $(n+1)$\textsuperscript{st} bin and trivially satisfies the recursion relation for $j=1$.

Recall a legal decomposition containing a member of the $(n+1)$\textsuperscript{st} bin would  not have other addends from any of bins $\{\mathcal{B}_{n-s+1}, \mathcal{B}_{n-s+2}, \dots , \mathcal{B}_{n}, \mathcal{B}_{n+1}\} $. So by construction we have $a_{2 + nb} = a_{1+nb} +a_{1+(n-s)b}$,  as the largest integer that can be legally decomposed using addends from bins $\mathcal{B}_1, \mathcal{B}_2, \dots, \mathcal{B}_{n-s}$ is $a_{1+(n-s)b} - 1.$

Using the same argument we have
\begin{align}
a_{3+nb}&  \ = \ a_{2+nb} + a_{1+(n-s)b} \ = \  a_{1+nb} + 2a_{1+(n-s)b}.
\end{align}
We proceed similarly for $j = 4, \dots, b.$
For $j=b+1$, the term $a_{b+1 + nb} = a_{1+(n+1)b}$ is the first entry in the $(n+2)$\textsuperscript{nd} bin.  By construction $a_{1+(n+1)b} = a_{(n+1)b} + a_{1+(n-s)b}$.  Using Equation \eqref{inbinRR} with $j=b$ we have \be a_{1+(n+1)b}\ =\ a_{(n+1)b} + a_{1+(n-s)b} = a_{1+nb}+(b-1)a_{1+(n-s)b} + a_{1+(n-s)b}\ =\ a_{1+nb}+ba_{1+(n-s)b}.\ee   \end{proof}

\begin{proof}[Proof of Theorem \ref{thrm:recurrence}]
Fix $s,b\geq 1$.  We  proceed by considering $i$ of the form $j+nb$, $j \in \{1, \ldots, b\}$, so $a_i = a_{j+nb}$ is the $j$\textsuperscript{th} entry in the $(n+1)$\textsuperscript{st} bin.  Using Lemma ~\ref{jthinbinRR},
%
\begin{align}
a_{j+nb}&\ = \ a_{1+nb}+(j-1)a_{1+(n-s)b} \nonumber\\
&\ = \ a_{1+(n-1)b}+ba_{1+(n-s-1)b}+(j-1)a_{1+(n-s)b}\nonumber\\
&\ = \  a_{1+(n-1)b}+(j-1)a_{1+(n-s-1)b}+(b-j+1)a_{1+(n-s-1)b}+(j-1)a_{1+(n-s)b}\nonumber\\
&\ = \  a_{j+(n-1)b}+(b-j+1)a_{1+(n-s-1)b}+(j-1)a_{1+(n-s)b}.
\end{align}

Again using the construction of our sequence we have $a_{1+(n-s)b} = a_{(n-s)b} + a_{1+(n-2s-1)b}$.  This substitution gives
\begin{align}
a_{j+nb}&\ = \ a_{j+(n-1)b}+(b-j+1)a_{1+(n-s-1)b}+(j-1)a_{(n-s)b} + (j-1)a_{1+(n-2s-1)b}\nonumber\\
&\ = \  a_{j+(n-1)b}+a_{1+(n-s-1)b}+ (j-1)a_{1+(n-2s-1)b}+(b-j)a_{1+(n-s-1)b}+(j-1)a_{(n-s)b} \nonumber\\
&\ = \  a_{j+(n-1)b}+a_{j+(n-s-1)b}+(b-j)a_{1+(n-s-1)b}+(j-1)a_{(n-s)b} .\label{RRalmostdone}
\end{align}

Note that by Lemma \ref{jthinbinRR},  $a_{(n-s)b} = a_{1+(n-s-1)b} + (b-1)a_{1+(n-2s-1)b}$, so the last two terms in  \eqref{RRalmostdone} may be simplified as
\begin{align}
 (b-j)a_{1+(n-s-1)b}+(j-1)a_{1+(n-s-1)b} + (j-1)(b-1)a_{1+(n-2s-1)b}  & \nonumber\\
 \ = \    (b-1)\left[a_{1+(n-s-1)b}+(j-1)a_{1+(n-2s-1)b} \right]  & \nonumber\\
 \ = \   (b-1)a_{j+(n-s-1)b}. & \label{RRtada}
\end{align}
Substituting \eqref{RRtada} into Equation \eqref{RRalmostdone} yields
\begin{align}
a_{j+nb}&\ = \ a_{j+(n-1)b}+a_{j+(n-s-1)b}+(b-1)a_{j+(n-s-1)b}\nonumber\\
&\ = \ a_{j+(n-1)b}+ba_{j+(n-s-1)b},
\end{align}
which completes the proof of  the first part of Theorem \ref{thrm:recurrence}.

For the proof of  the second part,  we have from  Lemma \ref{jthinbinRR}
\begin{align}
 a_{j+nb}  & \ = \  a _{j-1+nb}+ a_{1+(n-s)b},   \end{align}
thus
\begin{align}
 a_{j+nb}  & \ = \   a _{j-1+nb}+ a_{j-1+nb - (sb + j -2)}
 \end{align}
for $j=2, \ldots, b+1$.  The result now follows  if  we define $f(j+nb) = sb+j-1,$  for $ j=1, \ldots, b$.

We prove the Generalized Binet Formula and the approximation in Appendix \ref{sec:genbinetformSBGen}.
\end{proof}

\subsection{Recurrence Relations for Fibonacci Quilt Sequence}\label{sec:proofofreccurencefq}

\begin{proof}[Proof of Theorem \ref{thm:rrfibquilt}]
The proof is by induction. The basis cases for $n\leq11$ can be checked by brute force.

By construction, we can legally decompose all numbers in the interval $[1, q_{n-4}-1]$ using terms in  $\{q_1,\ldots,q_{n-5}\}$; $q_{n-4}$ was added to the sequence because it was the first number that could not be decomposed using those terms. So, using $q_n$, we can legally decompose all numbers in the interval, $[q_n,q_n+ q_{n-4}-1]$. In fact, we can decompose all numbers in the interval $[1,q_n+ q_{n-4}-1]$ using $\{q_1,\ldots,q_{n}\}$. The term $q_{n+1}$ will be  the smallest number that we cannot legally decompose using $\{q_1,\ldots,q_{n}\}$.   The argument above shows that $q_{n+1}\geq q_n+ q_{n-4}$.

Notice
\begin{eqnarray}\label{eq:pf}  q_{n}+q_{n-4}
 & \ =\  &  (q_{n-1}+q_{n-5})+q_{n-4} \nonumber\\
 & \ = \  &  q_{n-1}+(q_{n-4}+q_{n-5}) \nonumber\\
 & \ = \  & q_{n-1}+q_{n-2}. \end{eqnarray}

It remains to show that there is no legal decomposition of $m=q_n+ q_{n-4}=q_{n-1}+q_{n-2}$. If $q_n$ were in the decomposition of $m$, the remaining summands would have to add to $q_{n-4}$. But that is a contradiction as $q_{n-4}$ was added to the sequence because it had no legal decompositions as sums of other terms. Similarly, we can see that
 any legal decomposition of $m$ does not use $ q_{n-1},q_{n-2},q_{n-4}$.

Notice that $q_{n-3}$ must be part of any possible legal decomposition of $m$: if it were not, then $m < \sum_{i=1}^{n-5} q_i = q_{n} -6 < q_n < q_n + q_{n-4} =m$.   Hence any legal decomposition would have $m = q_{n-3} + x$, where the largest possible summand in the decomposition of $x$ is $q_{n-5}$.

Now assume we have a legal decomposition of $m = q_{n-3} + x$. There are two cases.\\ \

\noindent {\bf Case 1}: The legal decomposition of $x$ uses $q_{n-5}$ as a summand. So
\be m \ = \  q_{n-3} + x \ = \  q_{n-3} + q_{n-5} + y\ee
 and $y$ can be legally decomposed using summands from $\{q_1, q_2, \dots , q_{n-10}\}$. Then using Equation \eqref{eq:sum}, $y < \sum_{i=1}^{n-10} q_i= q_{n-5} -6.$ This leads us to the following:
\bea
q_n+q_{n-4}\,=\,m &\ < \ & q_{n-3} + q_{n-5} + q_{n-5} -6\nonumber\\
&<& q_{n-3} + q_{n-4} + q_{n-5} -6\nonumber\\
&=& q_{n-1} + q_{n-5} -6\nonumber\\
&=& q_{n}-6\nonumber\\
&<& q_n,
\eea a contradiction.\\ \

\noindent {\bf Case 2}: The largest possible summand used in the legal decomposition of $x$ is $q_{n-8}$. Thus
 \bea
q_n+q_{n-4}\,=\,m &\ < \ & q_{n-3} + \sum_{i=1}^{n-8} q_i\nonumber\\
&=& q_{n-3} +  q_{n-3} -6\nonumber\\
&<& q_{n-2} + q_{n-3}\nonumber\\
&<& q_n,
\eea another contradiction.

So $m$ cannot be legally decomposed using $\{q_1,\ldots,q_{n}\}$ and $q_{n+1} = q_n + q_{n-4}. $
The proof of Equation \eqref{eq:rr2} follows from the work done in Equation \eqref{eq:pf}.
To prove Equation \eqref{eq:sum}, note
\be  \sum_{i=1}^{n+1}q_i\ = \  q_{n+1}+\sum_{i=1}^{n}q_i  \ = \  q_{n+1}+q_{n+5}-6\ = \ q_{n+6}-6.\qedhere \ee
\end{proof}

\begin{prop}[Explicit Formula]
\label{prop:fibquiltnumbers} Let $q_n$ denote the $n$\textsuperscript{th} term in the Fibonacci Quilt sequence. Then \be q_n \ = \ \alpha_1 \lambda_1^n + \alpha_2 \lambda_2^n + \alpha_3 \overline{\lambda_2}^n, \ee where $\alpha_1 \approx 1.26724 $, \be \lambda_1 \ = \ \frac13 \left(\frac{27}{2} - \frac{3 \sqrt{69}}{2}\right)^{1/3} + \frac{\left(\frac12 \left(9 + \sqrt{69}\right)\right)^{1/3}}{3^{2/3}} \ \approx \ 1.32472 \ee and $\lambda_2 \approx -0.662359 - 0.56228i$ (which has absolute value approximately 0.8688).
\end{prop}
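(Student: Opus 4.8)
The plan is to read off from Theorem~\ref{thm:rrfibquilt} a constant-coefficient linear recurrence of order three for $\{q_n\}$, diagonalize it, and fit the three free coefficients to initial data. By \eqref{eq:rr2} we have $q_{n+1} = q_{n-1} + q_{n-2}$ for all $n \ge 5$; reindexing (replacing $n+1$ by $n$) this reads
\begin{equation}
q_n \ = \ q_{n-2} + q_{n-3}, \qquad n \ge 6,
\end{equation}
a homogeneous linear recurrence whose characteristic polynomial is $\chi(x) = x^3 - x - 1$.

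First I would establish that $\chi$ has three \emph{distinct} roots, which is exactly what legitimizes a pure-exponential ansatz with no polynomial-times-power corrections. Viewing $\chi$ as a depressed cubic $x^3 + cx + d$ with $c = -1$, $d = -1$, its discriminant is $-4c^3 - 27 d^2 = 4 - 27 = -23 < 0$; hence $\chi$ has exactly one real root $\lambda_1$ and a complex-conjugate pair $\lambda_2, \overline{\lambda_2}$, all simple. Cardano's formula applied to $x^3 - x - 1 = 0$ yields precisely the stated closed form for $\lambda_1$, and $\lambda_1 > 1$ since $\chi(1) = -1 < 0 < \chi(2) = 5$. The modulus of the complex roots then comes for free from Vi\`ete's relations: for the monic polynomial $x^3 + 0\cdot x^2 - x - 1$ the product of the roots equals $-(-1) = 1$, so $\lambda_1 |\lambda_2|^2 = 1$ and $|\lambda_2| = \lambda_1^{-1/2} \approx 0.8688$, matching the claim (and the vanishing sum of the roots pins down $\mathrm{Re}\,\lambda_2 = -\lambda_1/2$).

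With distinct roots, the standard theory of linear recurrences gives that every solution has the form $q_n = \alpha_1 \lambda_1^n + \alpha_2 \lambda_2^n + \alpha_3 \overline{\lambda_2}^n$. Because $q_n \in \R$ for all $n$ and $\lambda_1 \in \R$, the coefficient $\alpha_1$ is forced to be real while the coefficients of the conjugate roots must themselves be conjugate, $\alpha_3 = \overline{\alpha_2}$; this is consistent with the real value $\alpha_1 \approx 1.26724$ in the statement. To determine the $\alpha_i$ I would impose the recurrence-regime initial conditions: since \eqref{eq:rr2} generates $q_6, q_7, \dots$ correctly from $q_3 = 3$, $q_4 = 4$, $q_5 = 5$, the closed form fit to these three values is valid for all $n \ge 3$. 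This produces the $3 \times 3$ linear system
\begin{equation}
\alpha_1 \lambda_1^{\,k} + \alpha_2 \lambda_2^{\,k} + \alpha_3 \overline{\lambda_2}^{\,k} \ = \ k, \qquad k = 3,4,5,
\end{equation}
whose coefficient matrix is Vandermonde in the three distinct roots and hence invertible; solving it (numerically, after substituting the explicit roots) yields $\alpha_1 \approx 1.26724$ together with the conjugate pair $\alpha_2, \alpha_3$.

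The one point requiring genuine care, as opposed to raw computation, is the reindexing and the range of validity: \eqref{eq:rr2} holds only for $n \ge 5$, so I must verify that three \emph{consecutive} terms in the recurrence regime (namely $q_3, q_4, q_5$) suffice to propagate the sequence forward, and observe that the formula should not be expected to reproduce $q_1, q_2$. Everything else, including the discriminant computation, the Cardano expression for $\lambda_1$, and the solution of the Vandermonde system, is routine and can be carried out explicitly or numerically.
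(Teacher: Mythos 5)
Your proposal is correct and takes essentially the same route as the paper: pass to the minimal recurrence \eqref{eq:rr2} with characteristic cubic $x^3 - x - 1$, solve it by the cubic formula, and determine the three coefficients by fitting initial terms via an invertible (Vandermonde-type) linear system. You additionally supply details the paper leaves implicit---the discriminant computation $4 - 27 = -23$ establishing one simple real root and a conjugate pair, Vi\`ete's relations giving $|\lambda_2| = \lambda_1^{-1/2} \approx 0.8688$ and $\mathrm{Re}\,\lambda_2 = -\lambda_1/2$, and the careful observation that the closed form is only guaranteed for $n \ge 3$ (it in fact fails at $n = 1$, so the paper's unrestricted statement is slightly loose)---all of which strengthens rather than alters the argument.
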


\begin{proof} Using the recurrence relation from Equation \eqref{eq:rr1} in Theorem \ref{thm:rrfibquilt}, we have the characteristic equation
\begin{align}
x^3 \ = \  x+1.
\end{align}
Hence $q_n \ = \ \alpha_1 \lambda_1^n + \alpha_2 \lambda_2^n + \alpha_3 \overline{\lambda_2}^n$, where $\lambda_1$,  $\lambda_2$ and $\overline{\lambda_2}$ are the three distinct solutions to the characteristic equation, which are easily found by the cubic formula.

We solve for the $\alpha_i$ using the first few terms of the sequence. Straightforward calculations reveal
\begin{align}
\alpha_1&\ \approx\ \ 1.26724\nonumber\\
\alpha_2&\ \approx\ -0.13362 + 0.128277 i\nonumber\\
\alpha_3&\ \approx\ -0.13362 - 0.128277 i,
\end{align} completing the proof.
\end{proof}


\section{Growth Rate of Number of Decompositions for Fibonacci Quilt Sequence}\label{sec:growthratefibquilt}

We prove Theorem \ref{thm:growthratenumberdecomp} by deriving a recurrence relation for the number of FQ-legal decompositions. Specifically, consider the following definitions.

\begin{itemize}

\item $d_n$: the number of FQ-legal decompositions using only elements of $\{q_1, q_2, \dots, q_n\}$. Note we include one empty decomposition of 0 in this count. Further, some of the decompositions are of numbers larger than $q_{n+1}$ (for example, for $n$ large $q_n + q_{n-2} + q_{n-20} > q_{n+1}$). We set $d_0 = 1$.

\item $c_n$: the number of FQ-legal decompositions using only elements of $\{q_1, q_2, \dots, q_{n}\}$ \emph{and} $q_{n}$ is one of the summands. We set $c_0 = 1$.

\item $b_n$: the number of FQ-legal decompositions using only elements of $\{q_1, q_2, \dots, q_{n}\}$ \emph{and} both $q_n$ and $q_{n-2}$ are used.

\end{itemize}

By brute force one can compute the first few values of these sequences; see Table \ref{table:valuesdcb}.

\begin{center}
\begin{table}[h]
\begin{center}
\begin{tabular}{|r||r|r|r||r|}
\hline
$n$ & $d_n$ & $c_n$ & $b_n$ & $q_n$\\
  \hline
\hline
  1 & 2 & 1 & 0 & 1 \\
  2 & 3 & 1 & 0 & 2 \\
  3 & 4 & 1 & 0 & 3\\
  4 & 6 & 2 & 1 & 4\\
  5 & 8 & 2 & 1 & 5\\
  6 & 11 & 3 & 1& 7 \\
  7 & 15 & 4 & 1& 9 \\
  8 & 21 & 6 & 2& 12 \\
  9 & 30 & 9 & 3& 16 \\
  10 & 42  & 12 & 4& 21 \\
  11 & 59 & 17 & 6& 28 \\
  12 & 82 & 23 & 8& 37 \\
  13 & 114 & 32 & 11& 49 \\
  \hline
\end{tabular}
\caption{Values of the first few terms of $d_n$, $c_n$ and $b_n$; for ease of comparison we have included $q_n$ as well.}\label{table:valuesdcb}
\end{center}
\end{table}
\end{center}

We first find three recurrence relations interlacing our three unknowns.

\begin{lem} For $n \ge 7$ we have
\bea\label{eq:dncnbnrelations} d_n & \ = \ & c_n + c_{n-1} + \cdots + c_0  \ = \ c_n + d_{n-1} \nonumber\\ c_n & \ = \ & d_{n-5} + c_{n-2} - b_{n-2} \nonumber\\ b_n &\ =\ & d_{n-7} , \eea which implies \be\label{eq:relationusingonlyd} d_n \ = \ d_{n-1} + d_{n-2}  - d_{n-3} + d_{n-5} - d_{n-9}. \ee \end{lem}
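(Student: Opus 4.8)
The plan is to establish the three structural recurrences in \eqref{eq:dncnbnrelations} by direct combinatorial case analysis, reading off what the forbidden index differences $0,1,3,4$ permit, and then to eliminate $c_n$ and $b_n$ algebraically to reach \eqref{eq:relationusingonlyd}. The first identity is the easiest: since $q_k$ is the largest element of $\{q_1,\dots,q_k\}$, a decomposition counted by $c_k$ has $q_k$ as its \emph{largest} summand, so $c_k$ counts exactly the FQ-legal decompositions whose top index is $k$. Grouping every decomposition counted by $d_n$ according to its largest summand (and reading the empty decomposition as the $c_0=1$ term) gives $d_n=\sum_{k=0}^n c_k$, and peeling off the top term yields $d_n=c_n+d_{n-1}$.

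For the recurrence for $b_n$, I would take a decomposition using both $q_n$ and $q_{n-2}$ and delete these two summands. Because the forbidden differences are $0,1,3,4$, the summand $q_n$ excludes indices $n-1,n-3,n-4$ and $q_{n-2}$ excludes $n-1,n-3,n-5,n-6$; together with the two used indices this rules out everything down through $n-6$, so what remains is an arbitrary FQ-legal decomposition drawn from $\{q_1,\dots,q_{n-7}\}$, and conversely any such decomposition may be augmented by $q_n$ and $q_{n-2}$ without creating a conflict (indices $\le n-7$ differ from $n$ and $n-2$ by at least $5$). Hence $b_n=d_{n-7}$. The recurrence for $c_n$ is the subtle one and is where I expect the real work to lie. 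Starting from a decomposition counted by $c_n$, I split on whether $q_{n-2}$ is used. If it is not, deleting $q_n$ leaves a decomposition from $\{q_1,\dots,q_{n-5}\}$ (the indices $n-1,n-3,n-4$ being forbidden and $n-2$ unused), contributing $d_{n-5}$. If $q_{n-2}$ is used, I would instead build such decompositions by starting from the $c_{n-2}$ decompositions (those with top summand $q_{n-2}$) and prepending $q_n$. The key point is that difference $2$ is \emph{allowed}, so $q_n$ and $q_{n-2}$ may coexist; a $c_{n-2}$ decomposition already avoids $q_{n-1}$ and $q_{n-3}$ automatically, so the only obstruction to adding $q_n$ is the presence of $q_{n-4}$ (difference $4$). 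The $c_{n-2}$ decompositions that do use $q_{n-4}$ are exactly those using both $q_{n-2}$ and $q_{n-4}$, i.e. $b_{n-2}$ of them, so the admissible ones number $c_{n-2}-b_{n-2}$. Adding the two cases gives $c_n=d_{n-5}+c_{n-2}-b_{n-2}$.

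With the three relations in hand, deriving \eqref{eq:relationusingonlyd} is pure elimination: the first relation gives $c_n=d_n-d_{n-1}$ and, shifted, $c_{n-2}=d_{n-2}-d_{n-3}$, while the third gives $b_{n-2}=d_{n-9}$; substituting these into $c_n=d_{n-5}+c_{n-2}-b_{n-2}$ and collecting terms yields $d_n=d_{n-1}+d_{n-2}-d_{n-3}+d_{n-5}-d_{n-9}$. I would note that all shifted indices are legitimate once $n\ge 9$ (so that $b_{n-2}=d_{n-9}$ is available), with the handful of smaller values read off from Table~\ref{table:valuesdcb}. The main obstacle, and the step I would write most carefully, is the $c_n$ analysis: one must verify that the two cases are genuinely disjoint and exhaustive, that deleting or prepending the top summand(s) leaves precisely a lower-index FQ-legal decomposition with no residual interaction (in particular that the special rule $\{1,3\}\not\subset\{\ell_1,\dots,\ell_t\}$ is untouched, which holds since the removed indices $n,n-2$ exceed $3$ for $n\ge 7$), and that the $c_{n-2}-b_{n-2}$ correction counts exactly the decompositions avoiding $q_{n-4}$.
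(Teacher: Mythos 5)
Your proposal is correct and follows essentially the same route as the paper's proof: the same partition of $d_n$ by largest index used, the same split on whether $q_{n-2}$ appears together with the inclusion--exclusion correction $c_{n-2}-b_{n-2}$ for $c_n$, the same deletion/augmentation argument giving $b_n=d_{n-7}$, and the same algebraic elimination at the end. Your extra care with the bijection details, the $\{1,3\}$ starting rule, and the observation that the $d$-only recurrence \eqref{eq:relationusingonlyd} genuinely requires $n\ge 9$ only tightens points the paper glosses over.
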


\begin{proof} The relation for $d_n$ in \eqref{eq:dncnbnrelations} is the simplest to see. The left hand side counts the number of FQ-legal decompositions where the largest element used is $q_n$, which may or may not be used. The right hand side counts the same quantity, partitioning based on the largest index used. It is important to note that $c_0$ is  included and equals 1, as otherwise we would not have the empty decomposition (corresponding to an FQ-legal decomposition of 0). We immediately use this relation with $n-1$  for $n$ to replace $c_{n-1} + \cdots + c_0$ with $d_{n-1}$.

Our second relation comes from counting the number of FQ-legal decompositions where $q_n$ is used and no larger index occurs, which is just $c_n$. Since $q_n$ occurs in all such numbers we cannot use $q_{n-1}, q_{n-3}$ or $q_{n-4}$, but $q_{n-2}$ may or may not be used. If we do not use $q_{n-2}$ then we are left with choosing FQ-legal decompositions where the largest index used is at most $n-5$; by definition this is $d_{n-5}$. We must add back all the numbers arising from decompositions using  $q_n$ and $q_{n-2}$. Note that if $n-2$ was the largest index used then the number of valid decompositions is $c_{n-2}$; however, this includes $b_{n-2}$ decompositions where we use both $q_{n-2}$ and $q_{n-4}$. As we \emph{must} use $q_n$, we cannot use $q_{n-4}$ and thus these $b_{n-2}$ decompositions should not have been included; thus $c_n$ equals $d_{n-5} + c_{n-2} - b_{n-2}$. (Note: alternatively one could prove the relation $c_n= d_{n-5} + b_n$.)

Finally, consider $b_n$. This counts the times we use $q_n$ (which forbids us from using $q_{n-1}, q_{n-3}$ and $q_{n-4}$) and $q_{n-2}$ (which forbids us from using $q_{n-3}, q_{n-5}$ and $q_{n-6}$). Note all other indices at most $n-7$ may or may not be used, and no other larger index can be chosen. By definition the number of valid choices is $d_{n-7}$.

We now easily derive a recurrence involving just the $d$'s. The first relation yields $c_n = d_n - d_{n-1}$ while the third gives $b_n = d_{n-7}$. We can thus rewrite the second relation involving only $d$'s, which immediately gives \eqref{eq:relationusingonlyd}. \end{proof}

Armed with the above, we solve the recurrence for $d_n$.




\begin{lem}\label{lem:growthofdn} We have \be d_n \ = \ \beta_1 r_1^n \left[1 + O\left((r_2/r_1)^n\right)\right], \ee where $\beta_1 > 0$, $r_1 \approx 1.39704$ and $r_2 \approx 1.07378$ are the two largest (in absolute value) roots of $r^7 - r^6 - r^2  -1 = 0$.
\end{lem}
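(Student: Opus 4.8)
The plan is to treat \eqref{eq:relationusingonlyd} as a linear recurrence of order $9$ with constant coefficients and extract the dominant-root asymptotics in the standard way. First I would write down the characteristic polynomial associated to $d_n = d_{n-1} + d_{n-2} - d_{n-3} + d_{n-5} - d_{n-9}$, namely
\be
P(x) \ = \ x^9 - x^8 - x^7 + x^6 - x^4 + 1.
\ee
The crucial observation is that $P$ factors, and I would verify directly (by expanding the product) that
\be
P(x) \ = \ \left(x^7 - x^6 - x^2 - 1\right)\left(x^2 - 1\right).
\ee
Thus the roots of $P$ are the roots of $g(x) := x^7 - x^6 - x^2 - 1$ together with the two ``spurious'' roots $x = \pm 1$. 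Since $|\pm 1| = 1$, these two extra roots contribute only an $O(1)$ term and cannot affect the leading two terms of the asymptotic expansion, so the growth of $d_n$ is governed entirely by the roots of $g$.

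\textbf{Root analysis for $g$.} Next I would locate the roots of $g(x) = x^7 - x^6 - x^2 - 1$. By the intermediate value theorem $g$ has a real root $r_1$ with $g(1) = -2 < 0$ and $g(1.4) > 0$, pinning $r_1 \approx 1.39704$; computing $g'$ and checking $g'(r_1) \neq 0$ (or checking $\gcd(g,g')=1$) shows this root is simple. The heart of the argument is to prove that $r_1$ \emph{strictly dominates} every other root of $P$ in modulus, and that a second real root $r_2 \approx 1.07378$ is strictly the next largest in modulus, with all remaining roots of $g$ (and the roots $\pm 1$) having modulus strictly below $r_2$. I would establish the dominance of $r_1$ and the location of $r_2$ by a direct root-counting/modulus-bounding argument: for instance, a Rouch\'e-type comparison on circles of radius slightly above $r_1$ and slightly above $r_2$, or an explicit enumeration of the seven roots of the fixed degree-$7$ polynomial confirming the claimed moduli. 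Granting this, the general theory of linear recurrence sequences gives
\be
d_n \ = \ \beta_1 r_1^n + \beta_2 r_2^n + \sum_{j} p_j(n)\rho_j^n,
\ee
where the last sum ranges over the remaining roots $\rho_j$ (with $|\rho_j| < r_2$) and polynomial coefficients $p_j$, whence $d_n = \beta_1 r_1^n\left[1 + O\left((r_2/r_1)^n\right)\right]$.

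\textbf{Positivity of $\beta_1$.} Finally I would argue $\beta_1 > 0$. Since $r_1$ is simple, $\beta_1 = \lim_{n\to\infty} d_n / r_1^n$, and this limit exists by the expansion above. Because the $d_n$ count FQ-legal decompositions they are positive and (from Table~\ref{table:valuesdcb}) strictly increasing, so $\beta_1 \ge 0$; to rule out $\beta_1 = 0$ it suffices to exhibit a lower bound $d_n \ge \text{const}\cdot r_1^n$ for infinitely many $n$, which follows since a vanishing leading coefficient would force $d_n = O(r_2^n)$, contradicting the exponential growth rate $r_1$ already visible numerically. Equivalently, one may simply compute $\beta_1$ from the initial data via the Vandermonde system and confirm $\beta_1 > 0$.

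\textbf{Main obstacle.} The only nonroutine step is the root-location argument: establishing that $r_1$ is a simple, real, strictly dominant root of $g$ and that $r_2$ is strictly second in modulus (all other roots lying strictly inside the disk of radius $r_2$). Unlike the positive-linear-recurrence setting, the recurrence \eqref{eq:relationusingonlyd} has negative coefficients, so Perron--Frobenius does not apply directly; the separation of moduli must instead be verified by an explicit analysis of the degree-$7$ factor. Once this separation is in hand, the asymptotic form and $\beta_1 > 0$ follow from standard linear-recurrence theory together with the positivity of the combinatorial sequence.
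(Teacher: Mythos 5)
Your overall route is the same as the paper's: factor the characteristic polynomial $x^9 - x^8 - x^7 + x^6 - x^4 + 1 = (x-1)(x+1)(x^7 - x^6 - x^2 - 1)$, check the septic's roots are distinct with a dominant root $r_1 \approx 1.39704$ and all others of modulus at most $r_2 \approx 1.07378$, and invoke the standard expansion $d_n = \beta_1 r_1^n + \cdots$. One factual slip along the way: $r_2$ is \emph{not} a second real root. For $x \le 0$ every term of $g(x) = x^7 - x^6 - x^2 - 1$ is nonpositive, and numerically $r_1$ is the septic's only real root; the second-largest roots in modulus are a complex conjugate pair of modulus $\approx 1.07378$ (as the paper's proof states). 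This does not damage the asymptotic, which depends only on moduli, and your treatment of root location (numerical/Rouch\'e-style verification for a fixed degree-$7$ polynomial) is no less rigorous than the paper's, which likewise simply asserts the numerics.

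The genuine gap is in your proof that $\beta_1 > 0$. Your primary argument is circular: you rule out $\beta_1 = 0$ because it ``would force $d_n = O(r_2^n)$, contradicting the exponential growth rate $r_1$ already visible numerically'' --- but finitely many table entries cannot certify an asymptotic growth rate; every finite initial segment is consistent with $d_n = O(r_2^n)$ for a suitable constant. What is needed, and what you never supply, is a \emph{proven} exponential lower bound on $d_n$ beating $r_2^n$. The paper gets one combinatorially: by the construction of the Fibonacci Quilt sequence, every integer in $[0, q_{n+1})$ has at least one FQ-legal decomposition using only $\{q_1,\dots,q_n\}$, so $d_n \ge q_n$; since $q_n \sim \alpha_1 \lambda_1^n$ with $\lambda_1 \approx 1.3247 > r_2 \approx 1.07378$, $\beta_1 = 0$ would give $d_n < q_n$ for large $n$, a contradiction. (An equally elementary alternative from the interlacing recurrences: $c_n = d_{n-5} + b_n \ge d_{n-5}$, hence $d_n = c_n + d_{n-1} \ge 2d_{n-5}$, so $d_n \gg 2^{n/5}$ and $2^{1/5} \approx 1.1487 > r_2$.) Your fallback of computing $\beta_1$ from the Vandermonde system could in principle be made rigorous with controlled numerics, but as written it is an unexecuted computation rather than an argument; the comparison of $d_n$ against a second sequence of known growth rate is the missing idea.
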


\begin{proof} The characteristic polynomial associated to the recurrence for $d_n$ in \eqref{eq:relationusingonlyd} factors as \be r^9 - r^8 - r^7 + r^6 - r^4 + 1 \ = \ (r-1)(r+1)(r^7 - r^6 - r^2  -1). \ee The roots of the septic are all distinct, with the largest $r_1$ approximately 1.39704 and the next two largest being complex conjugate pairs of size $r_2 \approx 1.07378$; the remaining roots are at most 1 in absolute value. Thus by standard techniques for solving recurrence relations \cite{Gol} (as the roots are distinct) there are constants such that \be d_n \ = \ \beta_1 r_1^n + \beta_2 r_2^n + \cdots + \beta_7 r_7^n + \beta_8 1^n + \beta_9 (-1)^n. \ee

To complete the proof, we need only show that $\beta_1 > 0$ (if it vanished, then $d_n$ would grow slower than one would expect). As the roots come from a degree 7 polynomial, it is not surprising that we do not have a closed form expression for them. Fortunately a simple comparison proves that $\beta_1 >  0$. Since $d_n$ counts the number of FQ-legal decompositions using indices no more than $q_n$, we must have $d_n \ge q_n$. As $q_n$ grows like $\lambda_1^n$ with $\lambda_1 \approx 1.3247$, if $\beta_1 = 0$ then $d_n < q_n$ for large $n$, a contradiction. Thus $\beta_1 > 0$.\end{proof}


We can now determine the average behavior of $\dfq(m)$, the number of FQ-legal decompositions of $m$.

\begin{proof}[Proof of Theorem \ref{thm:growthratenumberdecomp}]  We have \be\label{eqn:ave} \dave(n) \ = \ \frac{1}{q_{n+1}} \sum_{m=0}^{q_{n+1}-1} \dfq(m). \ee

We first deal with the upper bound. The summation on the right hand side of Equation \eqref{eqn:ave} is less than $d_n$, because $d_n$ counts some FQ-legal decompositions that exceed $q_{n+1}$. Thus \be \dave(n)\ \le\ \frac{d_n}{q_{n+1}}. \ee For $n$ large by Lemma \ref{lem:growthofdn} we have \be d_n\ =\ \beta_1 r_1^n \left[1 + O\left((r_2/r_1)^n\right)\right]\ee with $\beta_1 > 0$ and $r_1 \approx 1.39704$, and from Proposition \ref{prop:fibquiltnumbers} \be q_n \ = \ \alpha_1 \lambda_1^n \left[1 + O\left((\lambda_2/\lambda_1)^n\right)\right] \ee where $\alpha_1 \approx 1.26724$, \be \lambda_1 \ = \ \frac13 \left(\frac{27}{2} - \frac{3 \sqrt{69}}{2}\right)^{1/3} + \frac{\left(\frac12 \left(9 + \sqrt{69}\right)\right)^{1/3}}{3^{2/3}} \ \approx \ 1.32472 \ee and $\lambda_2 \approx -0.662359 - 0.56228i$ (which has absolute value approximately 0.8688). Thus there is a $C_2 > 0$ such that for $n$ large we  have $\dave(n) \le C_2 (r_1/\lambda_1)^n$.


We now turn to the lower bound for $\dave(n)$. As we are primarily interested in the growth rate of $\dave(n)$ and not on optimal values for the constants $C_1$ and $C_2$, we can give a simple argument which suffices to prove the exponential growth rate, though at a cost of a poor choice of $C_1$. Note that for large $n$ the sum on the right side of Equation \eqref{eqn:ave} is clearly at least $d_{n-2016}$. To  see this, note $d_{n-2016}$ counts the number of FQ-legal decompositions using no summand larger than $q_{n-2016}$,  and if $q_{n-2016}$ is our largest summand then by \eqref{eq:sum} our number cannot exceed \be \sum_{i=1}^{n-2016} q_i \ =\  q_{n-2011}-6 \ \le \ q_n. \ee  Thus \be \dave(n) \ \ge \ \frac{d_{n-2016}}{q_{n+1}}.\ee We now argue as we did for the upper bound, noting that for large $n$ we have \be d_{n-2016} \ = \ r_1^{-2016} \cdot \beta_1 r_1^n \left[1 + O\left((r_2/r_1)^n\right)\right].\ee Thus for $n$ sufficiently large \be \dave(n) \ \ge \ C_1 (r_1/\lambda_1)^n,\ee completing the  proof.
\end{proof}

\section{Greedy Algorithms for the Fibonacci Quilt Sequence}\label{sec:greedy}

\subsection{Greedy Decomposition}

Let $h_n$ denote the number of integers from 1 to $q_{n+1}-1$ where the greedy algorithm successfully terminates in a legal decomposition.  We have already seen that the first number where the greedy algorithm fails is 6; the others less than 200 are 27, 34, 43, 55, 71, 92, 113, 120, 141, 148, 157, 178, 185 and 194.

Table \ref{table:numdecompgreedy} lists $h_n$ for the first few values of $n$, as well as $\rho_n$ the percentage of integers in $[1,q_{n+1})$ where the greedy algorithm yields a legal decomposition.

\begin{center}
\begin{table}[h]
\begin{center}
\begin{tabular}{|r||r|r||r|}
\hline
$n$ & $q_n$ & $h_n$ & $\rho_n$\\
  \hline
\hline
1	&	1	&	1	&	100.0000\\
2	&	2	&	2	&	100.0000\\
3	&	3	&	3	&	100.0000\\
4	&	4	&	4	&	100.0000\\
5	&	5	&	5	&	83.3333\\
6	&	7	&	7	&	87.5000\\
7	&	9	&	10	&	90.9091\\
8	&	12	&	14	&	93.3333\\
9	&	16	&	19	&	95.0000\\
10	&	21	&	25	&	92.5926\\
11	&	28	&	33	&	91.6667\\
12	&	37	&	44	&	91.6667\\
13	&	49	&	59	&	92.1875\\
14	&	65	&	79	&	92.9412\\
15	&	86	&	105	&	92.9204\\
16	&	114	&	139	&	92.6667\\
17	&	151	&	184	&	92.4623\\
  \hline
\end{tabular}
\caption{Values of the first few terms of $q_n$, $h_n$ and $\rho_n$.}\label{table:numdecompgreedy}
\end{center}
\end{table}
\end{center}

We start by determining a recurrence relation for $h_n$.

\begin{lem} For $h_n$ as above, \be\label{eq:recurrencehn} h_n \ = \ h_{n-1} + h_{n-5}+1, \ee with initial values $h_k = k$ for $1 \le k \le 5$. \end{lem}

\begin{proof} We can determine the number integers in $[1,q_{n+1})$ for which the greedy algorithm is successful by counting the same thing in $[1,q_{n})$ and in $[q_n,q_{n+1})$. The number of integers in $[1,q_{n})$ for which the greedy algorithm is successful is just $h_{n-1}$.

Integers  $m\in [q_n,q_{n+1})$ for which the greedy algorithm is successful must have largest summand $q_n$. So $m=q_n+x$. We claim $x\in [0,q_{n-4})$. Otherwise $m=q_n+x\geq q_n + q_{n-4}=q_{n+1}$, which is a contradiction. If $x=0$, then $m=q_n$ can be legally decomposed using the greedy algorithm and we must add 1 to our count. If $m$ is to have a successful legal greedy decomposition then so must $x$. Hence it remains to count how many $x\in [1,q_{n-4})$ have successful legal greedy decompositions, but this is just $h_{n-5}$. Combining these counts finishes the proof.
\end{proof}

We now prove the greedy algorithm successfully terminates for a positive percentage of integers, as well as fails for a positive percentage of integers.

\begin{proof}[Proof of Theorem \ref{thm:successgreedyalg}] Instead of solving the recurrence in \eqref{eq:recurrencehn}, it is easier to let $g_n = h_n + 1$ and first solve \be g_n \ = \ g_{n-1} + g_{n-5}, \ \ \ g_k \ = \ k+1 \ {\rm for}\ 1 \le k \le 5. \ee The characteristic polynomial for this is \be r^5 - r^4 - 1 \ = \ 0, \ \ \ {\rm or} \ \ \ (r^3 - r - 1) (r^2 - r + 1). \ee
By standard recurrence relation techniques, we have \be\label{eqn:gn} g_n \ = \ c_1 \lambda_1^n + c_2 \lambda_2^n + \cdots + c_5 \lambda_5^n, \ee where \be \lambda_1 \ = \ \frac13 \left(\frac{27}{2} - \frac{3 \sqrt{69}}{2}\right)^{1/3} + \frac{\left(\frac12 \left(9 + \sqrt{69}\right)\right)^{1/3}}{3^{2/3}} \ \approx \ 1.32472 \ee is the largest root of the recurrence for $g_n$ (the other roots are at most 1 in absolute value).

By Proposition \ref{prop:fibquiltnumbers} we have
\be q_n \ = \ \alpha_1 \lambda_1^n + \alpha_2 \lambda_2^n + \alpha_3 \lambda_3^n, \ee
where $\lambda_1,\lambda_2,\lambda_3$ are the same as in Equation \eqref{eqn:gn} and $\alpha_1\approx1.26724$.

 We must show that $c_1 \alpha_1 \neq 0$, as this will imply that $g_n$ and $q_n$ both grow at the same exponential rate.
As $g_n \ge 2 g_{n-5}$ implies $g_n \ge c 2^{n/5}$ we have that $g_n$ is growing exponentially,  thus $c_1 \neq 0$.

Unfortunately writing  $c_1$ in closed form requires solving a fifth order equation, but this can easily be done numerically and the limiting ratio $\rho_n = h_n/(q_{n+1}-1)$ can be approximated well. That ratio converges to $\frac{c_1}{\alpha_1} \frac1{\lambda_1} \approx 0.92627$. \end{proof}


\subsection{Greedy-6 Decomposition}

\ \\


\begin{lemma}\label{lem:5sum}
For $\ell\geq 1+5k$ and $k\geq0$, we have $q_\ell+q_{\ell-5}+\cdots+q_{\ell-5k}<q_{\ell+1}$.
\end{lemma}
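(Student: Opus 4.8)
The plan is to prove this by induction on $k$, with the parameter $\ell$ allowed to vary, exploiting the recurrence $q_{\ell+1} = q_\ell + q_{\ell-4}$ from \eqref{eq:rr1}. The essential observation is that the left-hand sum peels off its largest term to leave a sum of exactly the same shape but with $k$ decreased by one: writing $S(\ell,k) := q_\ell + q_{\ell-5} + \cdots + q_{\ell-5k}$, one has $S(\ell,k) = q_\ell + S(\ell-5,k-1)$ for $k \geq 1$. Since $q_{\ell+1} - q_\ell = q_{\ell-4}$ by \eqref{eq:rr1}, the whole claim reduces to showing that the remaining sum $S(\ell-5,k-1)$ is strictly smaller than $q_{\ell-4}$, which is precisely the inductive hypothesis applied to the shifted pair $(\ell-5,\,k-1)$.

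First I would dispose of the base case $k=0$: here the claim is simply $q_\ell < q_{\ell+1}$, which holds for every $\ell \geq 1$ because the Fibonacci Quilt sequence is strictly increasing. This case requires no recurrence, only monotonicity.

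For the inductive step I would fix $k \geq 1$ and take $\ell \geq 1+5k$. Since $\ell \geq 5k+1 \geq 6$, the recurrence \eqref{eq:rr1} is available and gives $q_{\ell+1} = q_\ell + q_{\ell-4}$. The pair $(\ell-5,\,k-1)$ satisfies the hypothesis of the lemma, because $\ell-5 \geq (5k+1)-5 = 1+5(k-1)$, so by induction
\begin{equation}
q_{\ell-5} + q_{\ell-10} + \cdots + q_{\ell-5k} \ < \ q_{(\ell-5)+1} \ = \ q_{\ell-4}.
\end{equation}
Adding $q_\ell$ to both sides and using \eqref{eq:rr1} yields
\begin{equation}
q_\ell + q_{\ell-5} + \cdots + q_{\ell-5k} \ < \ q_\ell + q_{\ell-4} \ = \ q_{\ell+1},
\end{equation}
which is exactly the desired inequality and completes the induction.

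There is no genuine obstacle here; the only points demanding care are bookkeeping rather than insight. One must check that the index constraint $\ell \geq 1+5k$ is exactly what guarantees both that the smallest summand $q_{\ell-5k}$ has index at least $1$ (so all terms are well-defined and positive) and that the shifted pair $(\ell-5,k-1)$ still lies in the admissible range, so the induction closes cleanly. One should also confirm that $\ell \geq 6$ in every case where \eqref{eq:rr1} is invoked, which holds automatically since $k \geq 1$ forces $\ell \geq 6$; the case $k=0$ sidesteps the recurrence entirely and needs only monotonicity.
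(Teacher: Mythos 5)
Your proof is correct and takes essentially the same route as the paper: induction on $k$, peeling off the largest summand, applying the inductive hypothesis to the shifted pair $(\ell-5,k-1)$ to bound the tail by $q_{\ell-4}$, and concluding via $q_{\ell+1}=q_\ell+q_{\ell-4}$. The only (harmless) differences are cosmetic: you start the induction at $k=0$ using monotonicity where the paper starts at $k=1$, and you spell out the index bookkeeping ($\ell\geq 6$, $\ell-5\geq 1+5(k-1)$) that the paper leaves implicit.
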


\begin{proof}
We proceed by induction on $k$. For the Basis Step, note \be q_\ell+q_{\ell-5}\ < \ q_\ell+q_{\ell-4} \ = \ q_{\ell+1}.\ee
For the Inductive Step: By inductive hypothesis and the recurrence relation stated in Theorem \ref{thm:rrfibquilt},
\be
q_\ell+(q_{\ell-5}+\cdots+q_{\ell-5k})<q_\ell+q_{\ell-4} \ = \  q_{\ell+1},
\ee completing the proof.
\end{proof}

\begin{proof}[Proof of Theorem \ref{thm:greedy6}]
For the first part, we verify that if $m \leq 151 = q_{17}$ the theorem holds. Define $I_n:=[q_n,q_{n+1}) = [q_n,q_{n+1}-1].$  Assume for all $m \in \displaystyle\cup_{\ell=1}^{n-1}I_\ell$, $m$ satisfies the theorem.  Now consider $m \in I_{n}$. If $m=q_n$ then we add done.  Assume  $m = q_{n} + x$ with $x > 0$. Since $q_{n+1} = q_n + q_{n-4},$ we know $x < q_{n-4}.$  Then by the inductive hypothesis we know the $x$ satisfies the theorem.  Namely, $\mathcal{G}(x) = q_{k_1} + q_{k_2} + \dots + q_{k_s}$ is a FQ-legal decomposition which satisfies either Condition (1) or (2) but not both.  Then $\mathcal{G}(m) = q_n + q_{k_1} + q_{k_2} + \dots + q_{k_s}$ and lastly $n-k_1 \geq 5$.\\ \

For the second part, let $m=q_{\ell_1}+q_{\ell_2}+\cdots+q_{\ell_{t-1}}+q_{\ell_t}$ be a decomposition that satisfies either Condition (1) or (2) but not both. Note that in both cases, this decomposition is legal. If $t=1$, then $m$ is a Fibonacci Quilt number and the theorem is trivial. So we assume $t\geq 2$. Hence by construction of the sequence, $m$ is not a Fibonacci Quilt number.

Let $\mathcal{G}(m) = q_{k_1}+q_{k_2}+\cdots+q_{k_s}$. Note that $s\geq2$. For contradiction we assume the given decomposition   is not the Greedy-6 decomposition. Without loss of generality we may assume $q_{\ell_1}\neq q_{k_1}$. Since $q_{k_1}$ was chosen according to the Greedy-6 algorithm,  $q_{\ell_1}< q_{k_1}$.\\ \

\noindent {\bf Case 1}: Using Lemma \ref{lem:5sum},
\begin{align}
m& \ = \  q_{\ell_1}+q_{\ell_2}+\cdots+q_{\ell_{t-1}}+q_{\ell_t}
\leq q_{\ell_1}+q_{\ell_1-5}+\cdots+q_{\ell_{1}-5(t-1)}
<q_{\ell_1+1}
\leq q_{k_1}
< m
\end{align}
which is a contradiction.\\ \

\noindent {\bf Case 2}: Again using Lemma \ref{lem:5sum},
\begin{align}
m&\ =\ q_{\ell_1}+q_{\ell_2}+\cdots+q_{\ell_{t-2}}+q_{4}+q_2
\ =\ q_{\ell_1}+q_{\ell_2}+\cdots+q_{\ell_{t-2}}+q_{5}+q_1\nonumber\\
&\ \leq\ q_{\ell_1}+q_{\ell_1-5}+\cdots+q_{\ell_{1}-5(t-2)}+q_1\nonumber\\
&\ <\ q_{\ell_1+1}+q_1\nonumber\\
&\ \leq\ q_{k_1}+q_1\nonumber\\
&\ \leq\ m
\end{align}
which is a contradiction.
\end{proof}

In order to prove Theorem \ref{thm:Dm} we will need several relationships between the terms in the Fibonacci Quilt sequence. The following lemma describes those relationships.

\begin{lemma}\label{lem:FQreln} The following hold.

\begin{enumerate}
\item If $n\geq 7$, then $2q_n  \ = \ q_{n+2}+q_{n-5}.$
\item If $n\geq 8$, then $q_n + q_{n-2 }  \ =\ q_{n+1} + q_{n-5}.$
\item If $n\geq 10$, then $q_n + q_{n-3 }  \ =\ q_{n+1} + q_{n-8}.$
\end{enumerate}
\end{lemma}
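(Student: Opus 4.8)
The plan is to establish each of the three identities directly from the fundamental recurrence relations in Theorem~\ref{thm:rrfibquilt}, namely $q_{n+1} = q_n + q_{n-4}$ (valid for $n \geq 6$) and $q_{n+1} = q_{n-1} + q_{n-2}$ (valid for $n \geq 5$). These are pure algebraic identities among the sequence terms, so no combinatorial or inductive argument about decompositions is needed; each follows from a short chain of substitutions using the recurrences, and the stated index bounds ($n \geq 7, 8, 10$) are exactly what is required to guarantee every invoked instance of a recurrence is legal.

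For part (1), the goal is $2q_n = q_{n+2} + q_{n-5}$. First I would expand $q_{n+2}$ via \eqref{eq:rr1} as $q_{n+2} = q_{n+1} + q_{n-3}$, and then $q_{n+1} = q_n + q_{n-4}$, giving $q_{n+2} = q_n + q_{n-4} + q_{n-3}$. Since $q_{n-4} + q_{n-3} = q_{n-3} + q_{n-4}$ is precisely $q_n$ by \eqref{eq:rr2} (applied with index $n-2$, i.e. $q_{n} = q_{n-2} + q_{n-3}$—one must instead recognize $q_{n-3}+q_{n-4}=q_n$ via \eqref{eq:rr1} rewritten, so I would be careful to pick the correct instance so that the leftover combines to $q_n - q_{n-5}$), the remaining terms should collapse to yield $2q_n$ after moving $q_{n-5}$ to the other side. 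For part (2), I would expand $q_{n+1} = q_n + q_{n-4}$, so the claim $q_n + q_{n-2} = q_{n+1} + q_{n-5}$ reduces to showing $q_{n-2} = q_{n-4} + q_{n-5}$, which is exactly \eqref{eq:rr1} with $n$ replaced by $n-5$ (valid since $n \geq 8$ gives $n-5 \geq 3$, and the more conservative bound in \eqref{eq:rr1} is comfortably met). For part (3), the target $q_n + q_{n-3} = q_{n+1} + q_{n-8}$ again reduces, after substituting $q_{n+1} = q_n + q_{n-4}$, to verifying $q_{n-3} = q_{n-4} + q_{n-8}$; this is \eqref{eq:rr1} with index shifted to $n-4$, requiring $n-4 \geq 6$, i.e. $n \geq 10$, which matches the hypothesis exactly.

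The main thing to watch, rather than any genuine obstacle, is bookkeeping: each identity must be rewritten so that after one application of $q_{n+1}=q_n+q_{n-4}$ the residual equation is again a single instance of the same recurrence at a shifted index, and I must confirm that shifted index stays within the range of validity. The index constraints $n\geq 7,8,10$ are clearly tuned to make the deepest invoked term (e.g. $q_{n-8}$ in part (3)) a legitimate sequence term obeying the recurrence, so part of the write-up is simply recording that each recurrence instance is applied only where Theorem~\ref{thm:rrfibquilt} guarantees it holds. I expect the proof to consist of three brief displayed computations, one per item, each no more than two or three equalities long.
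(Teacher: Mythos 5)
Your overall strategy---deriving each identity purely algebraically from \eqref{eq:rr1} and \eqref{eq:rr2}, with the index hypotheses serving only to legitimize each invoked recurrence instance---is exactly the paper's approach, and your parts (2) and (3) are correct in substance. One citation slip in part (2): the residual identity $q_{n-2}=q_{n-4}+q_{n-5}$ is \emph{not} \eqref{eq:rr1} with $n$ replaced by $n-5$ (that instance reads $q_{n-4}=q_{n-5}+q_{n-9}$); it is \eqref{eq:rr2} applied at index $n-3$, whose validity requires $n-3\geq 5$, i.e.\ $n\geq 8$. That happens to be the stated hypothesis, so your conclusion survives, but the bound you offer ($n-5\geq 3$) is not the operative constraint.

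The genuine gap is in part (1), which as written does not close. The identity you lean on, $q_{n-3}+q_{n-4}=q_n$, is false: by \eqref{eq:rr2} at index $n-2$ one has $q_{n-3}+q_{n-4}=q_{n-1}$, not $q_n$, and your parenthetical hedge (``one must instead recognize \dots so I would be careful to pick the correct instance'') concedes the step is unresolved. The repair is short: having expanded $q_{n+2}=q_n+q_{n-4}+q_{n-3}$, what you need is $q_{n-3}+q_{n-4}+q_{n-5}=q_n$, which follows from the two-step chain $q_{n-3}+q_{n-4}=q_{n-1}$ (\eqref{eq:rr2} at index $n-2$, needing $n\geq 7$) followed by $q_{n-1}+q_{n-5}=q_n$ (\eqref{eq:rr1} at index $n-1$, also needing $n\geq 7$); note the lemma's hypothesis $n\geq 7$ is exactly what both instances require. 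The paper's own computation for (1) is the mirror image and slightly cleaner: expand one copy of $q_n$ as $q_{n-1}+q_{n-5}$ via \eqref{eq:rr1}, then collapse $q_n+q_{n-1}$ to $q_{n+2}$ via \eqref{eq:rr2}; for (3) the paper writes $q_n+q_{n-3}=q_n+q_{n-4}+(q_{n-3}-q_{n-4})=q_{n+1}+q_{n-8}$, which is the same reduction you give.
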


\begin{proof} The proof follows from repeated uses of the recurrence relations stated in Theorem \ref{thm:rrfibquilt}:
\bea
2q_n\ = \ q_n+q_{n-1}+q_{n-5}  \ = \ q_{n+2}+q_{n-5},
\eea
\bea
q_{n} + q_{n-2}   \ = \  q_n + q_{n-4} + q_{n-5}\ = \ q_{n+1} + q_{n-5},
\eea
and
\bea
q_n + q_{n-3} \ = \ q_n + q_{n-4} + q_{n-3} - q_{n-4}\ = \ q_{n+1} + q_{n-8}.
\eea
\end{proof}

\begin{proof}[Proof of Theorem \ref{thm:Dm}] The proof follows by showing that we can move from $\mathcal{D}(m)$ to $\mathcal{G}(m)$ without increasing the number of summands by doing five types of moves. That the summation remains unchanged after each move follows from Lemma \ref{lem:FQreln} and Theorem \ref{thm:rrfibquilt}.  \\ \

\begin{enumerate}

\item Replace $2q_{n} $ with $q_{n+2}+q_{n-5}$ (for $n \ge 7$). (If $n\le 6$, replace $2q_6$ with $q_8+q_2$ , replace $2q_5$ with $q_7+q_1$, replace $2q_4$ with $q_6+q_1$, replace $2q_3$ with $q_5+q_1$, replace $2q_2$ with $q_4$, and replace $2q_1$ with $q_2$.)\\ \

\item Replace $q_{n-1} + q_{n-2}$ with $q_{n+1}$ (for $n \ge 5$). In other words, if we have two adjacent terms, use the recurrence relation to replace. (If $n\le 4$, replace $q_3+q_2$ with $q_5$ and replace $q_2+q_1$ with $q_3$.) \\ \

\item Replace $q_n+q_{n-2}$ with $q_{n+1}+q_{n-5}$  (for $n \ge 8$).     (If $n\le 7$, replace  $q_7+q_5$ with $q_8+q_2$, $q_6+q_4$ with $q_7+q_2$, $q_5+q_3$ with $q_7+q_1$, $q_4+q_2$ with $q_5+q_1$, and $q_3+q_1$ with $q_4$.)  \\ \

\item Replace $q_n+q_{n-3}$ with $q_{n+1}+q_{n-8}$  (for $n \ge 10$).   (If $n\le 9$, replace $q_9+q_6$ with $q_{10}+q_2$, $q_8+q_5$ with $q_9+q_1$, $q_7+q_4$ with $q_8+q_1$, $q_6+q_3$ with $q_7+q_1$, $q_5+q_2$ with $q_6$, and $q_4+q_1$ with $q_5$.)  \\ \

\item Replace $q_{n} + q_{n-4}$ with $q_{n+1}$ (for $n \ge 6$). In other words, if we have two adjacent terms, use the recurrence relation to replace.\\ \ 

\end{enumerate}

Notice that in all moves, the number of summands either decreases by one or remains unchanged. In addition, the sum of the indices either decreases or remains unchanged.  There are three situations where neither the index sum nor the number of summands decreases; $q_5+q_3=q_7+q_1$, $q_4+q_2=q_5+q_1$, and $2q_3 = q_5+q_1$. But in these situations,  the number of $q_5, \,q_4,\, q_3, \, q_2$ decrease.
Therefore this process eventually terminates because the index sum and the number of summands cannot decrease indefinitely.

Let $m=q_{\ell_1}+q_{\ell_2}+\cdots+q_{\ell_{t-1}}+q_{\ell_t}$ be the decomposition obtained after all possible moves. Each move  either decreases the number of summands or replaces two summands with two  that are farther apart in the sequence. In fact, closer examination of the moves reveals
$\ell_i-\ell_{i-1}\geq 5$ except maybe $\ell_{t-1}=5$ and $\ell_t = 1$.

If $\ell_{t-1}=5$ and $\ell_t = 1$, replace $q_5+q_1$ with $q_4+q_2$. By Theorem \ref{thm:greedy6} this is the Greedy-6 decomposition of $m$.
\end{proof}

\appendix

\section{Generalized Binet Formula for $(s,b)$-Generacci Sequence}\label{sec:genbinetformSBGen}


We now prove the Generalized Binet Formula for the $(s,b)$-Generacci sequence. The argument is \emph{almost} standard, but the fact that the leading coefficient in the recurrence relation is zero leads to some technical obstructions. We resolve these by first passing to a related characteristic polynomial where the leading coefficient is positive (and then Perron-Frobenius arguments are applicable), and then carefully expand to our sequence.

\begin{proof}[Proof of the Generalized Binet Formula in Theorem \ref{thrm:recurrence}] The recurrence in \eqref{recurrence} generates the  characteristic polynomial
\begin{align}
x^{(s+1)b}-x^{sb}-b\ = \ 0.
\label{sb-poly}
\end{align}
Letting $y=x^b$ in \eqref{sb-poly}, we are able to pass to studying
\begin{equation}
q(y) \ = \  y^{s+1} - y^s - b \ = \  0.
\label{auxiliary-poly}
\end{equation}

The polynomial $q(y)$ has the following properties.
\begin{enumerate}
\item[(1)] The roots are distinct.
\item[(2)] There is a positive root $r$ satisfying $r  > |r_j|$ where $r_j$ is any other  root of $q(y)$.
\item[(3)] The positive root $r$ described in (2) satisfies $r > 1$ and is the only positive root.
\end{enumerate}

To prove  property (1), consider
\begin{equation}
q'(y) \ = \  (s+1)y^s - sy^{s-1} \ = \  (s+1)y^{s-1}\left(y - \displaystyle \frac{s}{s+1}\right).
\end{equation}
If a repeated root $y$ exists then $q(y)=q'(y)= 0$. Clearly, $y=0$ is not a root, so $y=\displaystyle \frac{s}{s+1} < 1$. In this case,
\begin{equation}
b \ = \  \left(\displaystyle \frac{s}{s+1}\right)^s\left(\displaystyle \frac{s}{s+1} -1\right)\ <\ 0,
\end{equation}
which is a contradiction as $b$ is a positive integer.

Property (2) follows from the same argument used in the proof of Theorem A.1 in \cite{BBGILMT}, or by using the Perron-Frobenius Theorem for non-negative irreducible matrices.

Furthermore, since the root $r$ satisfies $r^s(r-1) = b$ and $b >0$, necessarily $r >1$. Now, $q(0) = -b, \ q(r) = 0,   \ q'(y) < 0$ for $y< \displaystyle \frac{s}{s+1}$, and $q'(y) > 0$ for $y > \displaystyle \frac{s}{s+1}$, implies that $q(y) > 0$ for all $y > r$. Hence, $r$ is the only positive root of $q(y)$, completing the proof of property (3).

Let $\omega_1  > 0$ be chosen so that $\omega_1^b =r$, and let the (distinct) roots of \eqref{auxiliary-poly} be denoted by $\omega_1^b, \omega_2^b, \ldots, \omega_{s+1}^b$, where $\omega_1^b > 1$ is the only positive root and $\omega_1^b > |\omega_j^b|$, for all $j=2, \ldots, s+1$.  For convenience, we arrange the roots  so that  $\omega_1^b > |\omega_2^b| \ge  \cdots \ge |\omega_{s+1}^b|$. Then the roots of \eqref{sb-poly} are given by
\begin{equation}
\omega_1,\, \omega_1 \zeta_b,\, \ldots,\, \omega_1 \zeta_b^{b-1},\,\omega_2,\, \omega_2 \zeta_b, \,\ldots, \,\omega_2 \zeta_b^{b-1}, \,\ldots,\, \omega_{s+1}, \,\omega_{s+1} \zeta_b, \,\ldots, \,\omega_{s+1} \zeta_b^{b-1},
\end{equation}
where $\zeta_b = e^{\frac{2\pi i}{b}}$ is a primitive $b$\textsuperscript{th} root of unity. Now, using standard results on solving linear recurrence relations (see for example \cite[Section 3.7]{Gol}), the $n$\textsuperscript{th} term of the sequence has an expansion
\begin{align}
a_n    \ = \   \sum_{k=0}^{b-1} \sum_{j=1}^{s+1} \alpha_{k,j}(\omega_j \zeta_b^k)^n \ = \     \sum_{j=1}^{s+1} \left(\sum_{k=0}^{b-1}\alpha_{k,j} \zeta_b^{kn}\right)\omega_j^n,
\end{align}
for some constants $\alpha_{k,j}$.

For $n=\ell b  + v, \ v=0,1,\ldots,b-1$,
\begin{equation}\zeta_b^{k(\ell b  + v)} \ = \  (\zeta_b^{b})^{\ell k} (\zeta_b^{vk}) \ = \  \zeta_b^{v k}, \  \  \  \mbox{for any } k.\end{equation}
Thus
\begin{align}
a_{\ell b  + v}    \ = \    \sum_{j=1}^{s+1} \left(\sum_{k=0}^{b-1}\alpha_{k,j}\zeta_b^{vk} \right) (\omega_j^{b})^{\ell}\omega_j^{v} \ = \     \sum_{j=1}^{s+1} c_j (\omega_j^{b})^{\ell},
\end{align}
where $c_j = \displaystyle \omega_j^{v} \sum_{k=0}^{b-1} \alpha_{k,j}\zeta_b^{vk}$ \  \ for $j=1, \ldots, s+1$.

Note that  $c_1$ must be  a real number, as otherwise $a_{\ell b  + v}$ is non-real for large $\ell$  (since $\omega_1^{b{\ell}} > 0$ is the dominant term in the expansion). The final step is to prove that $c_1 > 0$. If $c_1 < 0$, then  for large $\ell$,  $a_{\ell b  + v}< 0$  (again since $\omega_1^{b{\ell}} > 0$ is the dominant term in the expansion).  If $c_1 = 0$, then $a_{\ell b  + v}= \displaystyle \sum_{j=m}^{s+1} c_j \omega_j^{b\ell}$, where $m$ is the smallest index greater than 1 such that $c_{m} \neq 0$. Then the dominant term in the expansion is $\omega_m^{b  \ell}$,  where,  by property (3) of the polynomial $q(y)$,  the root $\omega_m^b$ is either negative or complex non-real. If $\omega_m^b < 0$, then $\omega_m^{b  \ell}$ alternates in sign which violates $a_{\ell b  + v}> 0$ for all $\ell$. If $\omega_m^b$ is complex nonreal, then $\omega_m^{b  \ell}$ is not always real, again  violating $a_{\ell b  + v}> 0$ for all $\ell$.  Thus, $c_j = 0$ for all $j > 1$, and since $c_1=0$ this implies that  $a_{\ell b  + v}= 0$, a contradiction.  \end{proof}


\ \\

\end{document}